\title[Rigidity on the moment curve and the conic]
{Bar-and-joint rigidity on the moment curve coincides with cofactor rigidity on a conic}
\author{Luis Crespo Ruiz \and Francisco Santos}
\address{
Departamento de Matem\'aticas, Estad\'istica y Computaci\'on,
Universidad de Cantabria,
39005 Santander, Spain
}
\email{francisco.santos@unican.es, luis.cresporuiz@unican.es}
\thanks{Supported by grants PID2019-106188GB-I00 funded by MCIN/AEI/10.13039/501100011033 and by FPU19/04163 of the Spanish Government and by project CLaPPo (21.SI03.64658) of Universidad de Cantabria and Banco Santander}
\newcommand{\rank}{\operatorname{rank}}
\newcommand{\R}{\mathbb{R}}
\newcommand{\RP}{\mathbb{RP}}
\newcommand{\RR}{\mathcal{R}}
\newcommand{\CC}{\mathcal{C}}
\newcommand{\HH}{\mathcal{H}}
\newcommand{\MM}{\mathcal{M}}
\newcommand{\NN}{\mathcal{N}}
\newcommand{\PP}{\mathcal{P}}
\newcommand{\JJ}{\mathcal{S}}
\newcommand{\p}{\mathbf{p}}
\newcommand{\x}{\mathbf{x}}
\newcommand{\ab}{\mathbf{a}}
\newcommand{\bb}{\mathbf{b}}
\newcommand{\q}{\mathbf{q}}
\newtheorem{theorem}{Theorem}[section]
\newtheorem{lemma}[theorem]{Lemma}
\newtheorem{proposition}[theorem]{Proposition}
\newtheorem{corollary}[theorem]{Corollary}
\newtheorem{conjecture}[theorem]{Conjecture}
\newtheorem{question}[theorem]{Question}
\theoremstyle{definition}
\newtheorem{definition}[theorem]{Definition}
\newtheorem{remark}[theorem]{Remark}
\begin{document}

\maketitle


\begin{abstract}
  We show that, for points along the moment curve, the bar-and-joint rigidity matroid and the hyperconnectivity matroid coincide, and that both coincide with the $C^{d-2}_{d-1}$-cofactor rigidity of points along any (non-degenerate) conic in the plane. 
  For hyperconnectivity in dimension two, having the points in the moment curve is no loss of generality.
  
  We also show that, restricted to bipartite graphs, the bar-and-joint rigidity matroid is freer than the hyperconnectivity matroid.
\end{abstract}



\section{Introduction}
\label{sec:intro}
Rigidity matroids were introduced by Graver in~\cite{Graver} as a way to formalize and generalize rigidity theory. Besides the matroids associated to (infinitesimal) \emph{bar-and-joint rigidity} they include two other important examples: the matroids of \emph{$C_{d-1}^{d-2}$-cofactor rigidity} introduced by Whiteley~\cite{Whiteley} and of the \emph{hyperconnectivity} introduced by Kalai~\cite{Kalai}. 
See the precise definitions below.

We show an interesting case in which the three theories are equivalent:

\begin{theorem}
	\label{thm:main}
	Let $d$ be a positive integer and let $t_1,\ldots,t_n\in \R$ be (distinct) real numbers. Then, the following three $d$-dimensional rigidity matroids coincide:
	\begin{itemize}
		\item The $C_{d-1}^{d-2}$-cofactor matroid of points $\{(t_i,t_i^2)\}_{i=1}^n$ along the standard parabola.
		\item The bar-and-joint matroid of points $\{(t_i,\dots,t_i^d)\}_{i=1}^n$ along the moment curve.
		\item The $d$-hyperconnectivity matroid of vectors $\{(1, t_i,\dots,t_i^{d-1})\}_{i=1}^n$, which, if no $t_i$ equals zero, is equivalent to that of $\{(t_i,\dots,t_i^{d})\}_{i=1}^n$.
	\end{itemize}
\end{theorem}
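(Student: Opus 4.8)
The plan is to transform all three rigidity matrices, by operations that leave the associated matroid on the ground set of edges unchanged --- rescaling an individual row by a nonzero scalar, and applying an invertible linear change of coordinates to the copy of $\R^d$ attached to each vertex (chosen independently from vertex to vertex) --- into one and the same matrix: the $d$-hyperconnectivity matrix of the vectors $\ab_i:=(1,t_i,\dots,t_i^{d-1})$, whose row for an edge $\{i,j\}$ with $i<j$ has $\ab_j$ in the block of vertex $i$, $-\ab_i$ in the block of vertex $j$, and zeros elsewhere. Once the two reductions below are in hand the coincidence of the three matroids follows, and the final clause of the theorem is the elementary remark that replacing each $\ab_i$ by $c_i\ab_i$ with $c_i\neq 0$ --- which is exactly what turns $(1,t_i,\dots,t_i^{d-1})$ into $(t_i,\dots,t_i^{d})$ when no $t_i$ vanishes --- can be undone by scaling the block of vertex $i$ by $c_i^{-1}$ and the row of each edge $\{i,j\}$ by $c_ic_j$.

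For the bar-and-joint matrix of the moment curve the row of $\{i,j\}$ carries $\p_i-\p_j=(t_i-t_j,\,t_i^2-t_j^2,\dots,t_i^d-t_j^d)$ in the block of vertex $i$ and its negative in the block of vertex $j$. Dividing this row by $t_i-t_j$ (legitimate since the $t_i$ are distinct) and using $t_i^k-t_j^k=(t_i-t_j)\sum_{a+b=k-1}t_i^at_j^b$, the block of vertex $i$ becomes the vector $\mathbf{w}_{ij}$ whose $k$-th entry is the complete homogeneous symmetric polynomial $\sum_{a+b=k}t_i^at_j^b$ ($k=0,\dots,d-1$). The key point is the pair of identities $\mathbf{w}_{ij}=U(t_i)\,\ab_j=U(t_j)\,\ab_i$, where $U(s)$ is the lower unitriangular matrix with $U(s)_{k,b}=s^{k-b}$ for $b\le k$: the first equality is immediate and the second is just $\mathbf{w}_{ij}=\mathbf{w}_{ji}$. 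Since $U(s)$ is unitriangular it is invertible for every real $s$, so the change of coordinates $U(t_k)^{-1}$ in the block of each vertex $k$ is admissible, and by the two identities it sends the block of vertex $i$ in the row of $\{i,j\}$ to $\ab_j$ and the block of vertex $j$ to $-\ab_i$ --- that is, it produces the hyperconnectivity matrix of the $\ab_i$.

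For the $C^{d-2}_{d-1}$-cofactor matrix, recall that (in a suitable normalization) the row of $\{i,j\}$ records the coefficient vector of $\ell_{ij}^{d-1}$, where $\ell_{ij}$ is an affine form vanishing on the line through $\q_i$ and $\q_j$, with that vector placed in the block of vertex $i$ and its negative in the block of vertex $j$; since $\ell_{ij}$ vanishes at $\q_i$ and at $\q_j$, its linear part --- and hence this coefficient vector --- is the same whether one expands around $\q_i$ or around $\q_j$, and depends only on the edge. For the standard parabola the line through $\q_i=(t_i,t_i^2)$ and $\q_j=(t_j,t_j^2)$ is $y=(t_i+t_j)x-t_it_j$, so around $\q_i$ we may take $\ell_{ij}=(y-t_i^2)-(t_i+t_j)(x-t_i)$; in the monomial basis centered at $\q_i$ the $(d-1)$-st power of $\ell_{ij}$ has coefficient vector equal to a fixed invertible diagonal matrix (binomial coefficients with alternating signs) times $V(t_i+t_j):=(1,\,t_i+t_j,\,\dots,\,(t_i+t_j)^{d-1})$. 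A uniform diagonal column operation therefore turns the row of $\{i,j\}$ into $V(t_i+t_j)$ in the block of vertex $i$ and $-V(t_i+t_j)$ in the block of vertex $j$, after which the same mechanism applies: $V(t_i+t_j)=S(t_i)\,\ab_j=S(t_j)\,\ab_i$ for the unitriangular shift matrix $S(s)_{k,b}=\binom{k}{b}s^{k-b}$, so the coordinate change $S(t_k)^{-1}$ in the block of each vertex $k$ again produces the hyperconnectivity matrix of the $\ab_i$. To pass from the parabola to an arbitrary non-degenerate conic I would invoke the projective invariance of planar $C^{d-2}_{d-1}$-cofactor rigidity, together with the fact that all non-degenerate conics are projectively equivalent; alternatively one repeats the computation with a degree-two rational parametrization of the conic, where the numerator of $\ell_{ij}$ pulled back along the parametrization again carries off the factor $(t-t_i)(t-t_j)$ and leaves the same $V(t_i+t_j)$-pattern.

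The part I expect to be the real obstacle is the cofactor half: fixing the exact normalization of the cofactor matrix, tracking the diagonal binomial factors and any sign conventions cleanly, and --- unless one is content to use projective invariance as a black box --- carrying the computation through for a general rational parametrization of the conic rather than only the parabola. The bar-and-joint/hyperconnectivity reduction is by comparison short, once one spots the factorization $t_i^k-t_j^k=(t_i-t_j)\sum_{a+b=k-1}t_i^at_j^b$. What makes both halves go through is the same structural fact: the symmetric vector attached to an edge can be written at once as a $t_i$-dependent unitriangular matrix applied to the monomial vector of $t_j$, and as the same expression with the roles of $i$ and $j$ interchanged; it is precisely this double description that forces the per-vertex changes of coordinates to be mutually consistent.
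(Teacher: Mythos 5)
Your proof is correct and is essentially the paper's own argument: your per-vertex unitriangular changes of coordinates $U(t_i)$ and $S(t_i)$, together with the identities $\mathbf{w}_{ij}=U(t_i)\ab_j=U(t_j)\ab_i$ and $V(t_i+t_j)=S(t_i)\ab_j=S(t_j)\ab_i$, are exactly the paper's device of choosing, per vertex, the polynomial bases $F^{t_i}(t)=\bigl((t_i^k-t^k)/(t_i-t)\bigr)_k$ and $G^{t_i}(t)=\bigl((t_i+t)^{k-1}\bigr)_k$ in its ``polynomial rigidity matrix'' and invoking independence of the matroid from the choice of bases. The only cosmetic difference is your detour through the $\ell_{ij}^{d-1}$ description of the cofactor rows (with the binomial diagonal factor), which the paper avoids by computing directly that $(x_i-x_j)^{d-k}(y_i-y_j)^{k-1}=(t_i-t_j)^{d-1}(t_i+t_j)^{k-1}$ on the parabola.
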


The \emph{moment curve} $\{(t,\dots,t^d): t\in \R\} \subset \R^d$ is an important curve from several points of view. From the perspective of algebraic geometry, it is the rational normal curve of degree $d$ (in a specific embedding, but any other embedding is algebraically isomorphic to it). From the perspective of polytope theory, any finite subset of $n$ points in the moment curve defines a \emph{cyclic polytope}, an example of a \emph{neighborly polytope}: a $d$-polytope with $n$ vertices that attains the maximum possible number of faces of each dimension.
From the perspective of rigidity theory, one important property of the moment curve is that it is contained in many (linearly independent) quadrics. By a result of Bolker and Roth~\cite{BR} this has drastic consequences for the bar-and-joint rigidity of bipartite graphs embedded with vertices in it. See Lemma~\ref{lemma:complete_bipartite} and Remark~\ref{rem:BolkerRoth}.

For $d=2$ the moment curve is the standard parabola.
Since the three forms of rigidity are projectively invariant, any non-degenerate conic can be substituted for the standard parabola, for arbitrary $d$  in part (1), or for $d=2$ in parts (2) and (3). 

\medskip
Let us  recall some of the needed rigidity concepts. Good comprehensive references  for background are \cite{NSW, Whiteley}; \cite{CJT} also contains everything we need. For hyperconnectivity see~\cite{Kalai} or~\cite{JT}.

A \emph{point configuration} is a finite list of distinct points $\p=(\p_1,\dots,\p_n)$ in $\R^d$. A \emph{framework} on $\p$ is a simple graph with vertex set $\{\p_1,\dots,\p_n\}$. Since the points in $\p$ are labelled by $[n]$, we identify frameworks on $\p$ with subsets of $\binom{[n]}{2}$. The \emph{bar-and-joint (infinitesimal) rigidity matrix} of $\p$ is the following $\binom{n}{2} \times nd$ matrix:
\[
R(\p):=
\begin{pmatrix}
\p_1-\p_2 & \p_2-\p_1 & 0 & \dots & 0 & 0 \\
\p_1-\p_3 & 0 & \p_3-\p_1 & \dots & 0 & 0 \\
\vdots & \vdots & \vdots & & \vdots & \vdots \\
\p_1-\p_n & 0 & 0 & \dots & 0 & \p_n-\p_1 \\
0 & \p_2-\p_3 & \p_3-\p_2 & \dots & 0 & 0 \\
\vdots & \vdots & \vdots & & \vdots & \vdots \\
0 & 0 & 0 & \dots & \p_{n-1}-\p_n & \p_n-\p_{n-1}
\end{pmatrix}
\]
That is to say, each row $r_{ij}$ of $R(\p)$ ($\{i,j\} \in \binom{[n]}{2}$) consists of $n$ blocks of size $d$ and has zeroes everywhere except in the $i$th and $j$th blocks, where it has, respectively, $\p_i-\p_j$ and $\p_j-\p_i$. 

\begin{definition}
	\label{defi:R}
	The \emph{bar-and-joint rigidity matroid} of $\p$ is the linear matroid of rows of $R(\p)$, with ground set $\binom{[n]}{2}$. We denote it $\RR_d(\p)$. A framework on $\p$ is called \emph{rigid} (respectively \emph{stress-free}, \emph{isostatic}) if the corresponding set of rows is spanning, (respectively \emph{independent}, \emph{a basis}) in the rigidity matroid. 
\end{definition}

The reason for this terminology is that, if $R(\p)|_G$ denotes the row-submatrix of $R(\p)$ corresponding to a framework $G$, then the kernel of $R(\p)|_G$ are the infinitesimal motions of the points (``joints'') that preserve lengths of all edges in $G$ (``bars''); the kernel of its transpose are the equilibrium stresses on $G$:  assignments of extension/contraction forces to the edges that cancel out at every vertex, so that the system is in equilibrium.

We can also give up symmetry and directly define the following \emph{hyperconnectivity matrix} of $\p$, introduced by Kalai~\cite{Kalai}:
\[
H(\p):=
\begin{pmatrix}
\p_2 & -\p_1 & 0 & \dots & 0 & 0 \\
\p_3 & 0 & -\p_1 & \dots & 0 & 0 \\
\vdots & \vdots & \vdots & & \vdots & \vdots \\
\p_n & 0 & 0 & \dots & 0 & -\p_1 \\
0 & \p_3 & -\p_2 & \dots & 0 & 0 \\
\vdots & \vdots & \vdots & & \vdots & \vdots \\
0 & 0 & 0 & \dots & \p_n & -\p_{n-1}
\end{pmatrix}.
\]

\begin{definition}
	\label{defi:H}
	We call \emph{hyperconnectivity matroid of $\p$}, and denote it $\HH_d(\p)$, the linear matroid of rows of $H(\p)$.
\end{definition}
For even $d$, $\HH_d(n)$ coincides with the algebraic matroid of skew-symmetric matrices of rank at most $d$, of interest in low-rank matrix completion problems. See, e.g.,~\cite{Bernstein}. This relation is well-known, but we offer a proof in  Proposition~\ref{prop:H_equals_S}. In the case $d=2$ this is, in turn, the same as the algebraic matroid of the Pl\"ucker embedding of the Grassmaniann $Gr(2,\R)$ in $\R^{\binom{n}2}$. The following combinatorial characterization of $\HH_2(n) $ is known~\cite{Bernstein}: a graph $G\subset\binom{[n]}2$ is independent in $\HH_2(n)$ if, and only if, $G$ admits an acyclic orientation with no alternating closed walk.

Finally, for a point configuration $\p\subset \R^2$ in the plane and a positive integer $d$ we define the  \emph{cofactor rigidity matrix} of degree $d-1$,
\[
\footnotesize
C_d(\p):=
\begin{pmatrix}
c(\p_1-\p_2) & -c(\p_1-\p_2) & 0 & \dots & 0 & 0 \\
c(\p_1-\p_3) & 0 & -c(\p_1-\p_3) & \dots & 0 & 0 \\
\vdots & \vdots & \vdots & & \vdots & \vdots \\
c(\p_1-\p_n) & 0 & 0 & \dots & 0 & -c(\p_1-\p_n) \\
0 & c(\p_2-\p_3) & -c(\p_2-\p_3) & \dots & 0 & 0 \\
\vdots & \vdots & \vdots & & \vdots & \vdots \\
0 & 0 & 0 & \dots & c(\p_{n-1}-\p_n) & -c(\p_{n-1}-\p_n)
\end{pmatrix},
\]
where  $ c(x,y)=(x^{d-1},x^{d-2}y,\ldots,xy^{d-2},y^{d-1})$.

\begin{definition}
	\label{defi:C}
	We call \emph{cofactor rigidity matroid of $\p$}, and denote it $\CC_{d-1}^{d-2}(\p)$, the linear matroid of rows of $C_d(\p)$.
\end{definition}

This formalism was introduced by Whiteley in the 1990's~\cite{Whiteley}, elaborating on previous work of Billera on smooth bivariate splines~\cite{Billera}.
The notation \emph{$\CC_{d-1}^{d-2}$} comes from the relation of this theory to $C^{d-2}$-splines of degree $d-1$.  

The three rigidity theories share important properties. 
Let $\p$ and $\q$ be two point configurations of size $n\ge d$ in general position in $\R^d$ and $\R^2$ respectively.\footnote{For $\RR_d$ and $\CC_{d-1}^{d-2}$ we here mean \emph{affine} general position; that is, all affinely dependent subsets are affinely spanning.
For $\HH_d$ we mean \emph{linear} general position: all linearly dependent subsets are linearly spanning.}
Then:
\begin{enumerate}
	\item $\rank(\RR_d(\p)) =\rank(\HH_d(\p)) = \rank(\CC_{d-1}^{d-2}(\q)) = nd-\binom{d+1}2$.
	\item Every copy of the complete graph $K_{d+2}$ is a circuit in $\RR_d(\p)$, $\HH_d(\p)$ and $\CC_{d-1}^{d-2}(\q)$. (And hence every copy of $K_m$, with $m\le d+1$, is independent).
\end{enumerate}
By a theorem of Nguyen~\cite{Nguyen}, matroids on the ground set $\binom{[n]}2$ and satisfying these two properties coincide exactly with the \emph{abstract rigidity matroids of dimension $d$} introduced by Graver~\cite{Graver}. 

For reasons that will become apparent in the proof, we call the matroid in Theorem \ref{thm:main} the \emph{polynomial rigidity matroid of degree $d-1$ with parameters $t_1,\dots, t_n$} (Definition~\ref{defi:poly}). We denote it $\PP_d(t_1,\dots, t_n)$. 
Since points on the moment curve are in general position, $\PP_d(t_1,\dots, t_n)$ is an abstract rigidity matroid too.

\medskip

There is some  interest among the experts on the relations between the three matroids $\HH_d(\p)$, $\RR_d(\p)$,  and $\CC_{d-1}^{d-2}(\q)$ in the case when $\p$ and $\q$ are generic. We denote these matroids $\HH_d(n)$, $\RR_d(n)$,  and $\CC_{d-1}^{d-2}(n)$, and denote by $\PP_d(n)$ the generic case of $\PP_d(t_1,\dots, t_n)$.

It is conjectured that $\CC_{d-1}^{d-2}(n)$ is freer than $\RR_d(n)$ and that the latter is freer than $\HH_d(n)$ (Conjecture~\ref{conj:freer}). It is also conjectured that $\CC_{d-1}^{d-2}(n)$ is the freest abstract rigidity matroid and that  $\HH_d(n)$ is the freest matroid  in which every $K_{d+2}$ and every $K_{d+1,d+1}$ are circuits (Conjectures~\ref{conj:maximalC} and ~\ref{conj:maximalH}). We recall what is known about these conjectures in Section~\ref{sec:discussion}, including a proof of the following result which is, to the best of our knowledge, new: restricted to bipartite graphs, $\HH_d(n)$ coincides with $\RR_d(\p)$ where $\p$ has the points of the two parts lying in two hyperplanes, but  otherwise generic. In particular, on bipartite graphs $\RR_d(n)$ is freer than $\HH_d(n)$  (Theorem \ref{thm:bipartite} and Corollary \ref{coro:bipartite}). 

We hope that our new matroid $\PP_d(n)$ may help shed light on these conjectures, since it shows a case where the three rigidity theories coincide.
We finish Section~\ref{sec:discussion} and the paper proving several properties that are common to  $\HH_d(n)$ and  $\PP_d(n)$ (Corollary~\ref{coro:complete_bipartite} and Propositions~\ref{prop:split} and~\ref{prop:diamond}). This suggests that perhaps $\HH_d(n) = \PP_d(n)$ (Question~\ref{question:equal}). For $d=2$ this is true, by the invariance of $\HH_d(n)$ under linear scaling of the positions of points (Theorem~\ref{thm:d=2}).

\section{Proof of Theorem~\ref{thm:main}}
\label{sec:proof}

The matroids $\CC_{d-1}^{d-2}$ and $\RR_d$ are invariant under projective transformation in $\RP^{d}\supset \R^d$, see e.g.~\cite{NSW}. $\HH_d$ is, as far as we know, only invariant under linear transformation or, rather, under projective transformation in $\RP^{d-1}$ as a quotient space of $\R^d\setminus\{0\}$:

\begin{lemma}
	\label{lemma:invariant}
	Let $\p=(\p_1,\dots,\p_n)$ and $\q=(\q_1,\dots,\q_n)$ be point configurations in $\R^d$ and assume that $\q$ is obtained from $\p$ by one of the following two procedures:
	\begin{enumerate}
		\item A global linear transformation $l:\R^d\to \R^d$ with $l(\p_i) = \q_i$ for all $i$.
		\item Multiplication of each $\p_i$ by a non-zero scalar $\alpha_i$, so that $\q_i=\alpha_i \p_i$ for each $i$.
	\end{enumerate}
	Then, $\HH_d(\p) = \HH_d(\q)$.
\end{lemma}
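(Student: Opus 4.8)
The plan is, in each of the two cases, to exhibit $H(\q)$ in the form $D_1\,H(\p)\,D_2$, where $D_1$ is an invertible \emph{diagonal} matrix of size $\binom{n}{2}\times\binom{n}{2}$ and $D_2$ is an invertible matrix of size $nd\times nd$. This suffices: left multiplication by $D_1$ only rescales each row of $H(\p)$ by a nonzero scalar, which affects neither the linear (in)dependences among the rows nor the indexing of rows by pairs, while right multiplication by the invertible $D_2$ is an isomorphism of $\R^{nd}$ and hence preserves linear (in)dependence of any set of row vectors. Therefore the linear matroid of the rows, which is exactly $\HH_d(\cdot)$, is unchanged; that is, $\HH_d(\p)=\HH_d(\q)$.

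For part (1), I would let $L$ be the $d\times d$ matrix through which $l$ acts on the (horizontal) $d$-vectors appearing in $H(\p)$, so that $\p_i L=l(\p_i)=\q_i$ for all $i$; here $L$ is invertible, since a ``linear transformation'' in the statement must be read as a linear isomorphism (otherwise the $\q_i$ need not be distinct and the conclusion can fail, e.g.\ for $l=0$). Take $D_1$ to be the identity and $D_2$ the block-diagonal matrix with $n$ diagonal blocks each equal to $L$. Right multiplication by $D_2$ turns the block $\p_j$ sitting in position $i$ of the row of $H(\p)$ indexed by $\{i,j\}$ into $\p_j L=\q_j$, and likewise $-\p_i$ into $-\q_i$; that is, $H(\p)\,D_2=H(\q)$. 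Since $L$ is invertible so is $D_2$, and this case is done.

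For part (2), the one point deserving a moment's thought is that the two nonzero blocks of the row $\{i,j\}$ of $H(\q)$ --- namely $\q_j=\alpha_j\p_j$ in block $i$ and $-\q_i=-\alpha_i\p_i$ in block $j$ --- are rescaled from the corresponding blocks of $H(\p)$ by \emph{different} factors $\alpha_j$ and $\alpha_i$, so column rescalings alone cannot produce $H(\q)$ and one must rescale rows as well to compensate. Concretely I would look for a block-diagonal $D_2$ whose $k$-th diagonal block is $\beta_k I_d$ (so it scales the $k$-th block of $H(\p)$ by $\beta_k$), together with $D_1$ scaling the row indexed by $\{i,j\}$ by some $\gamma_{ij}$. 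The requirement $D_1\,H(\p)\,D_2=H(\q)$ then unpacks into $\gamma_{ij}\beta_i=\alpha_j$ and $\gamma_{ij}\beta_j=\alpha_i$ for all $i<j$; these equations force $\alpha_i\beta_i$ to be independent of $i$, and conversely $\beta_i=\alpha_i^{-1}$ and $\gamma_{ij}=\alpha_i\alpha_j$ solves them all. Every scalar in sight is nonzero, so $D_1$ and $D_2$ are invertible, giving $D_1\,H(\p)\,D_2=H(\q)$ and hence $\HH_d(\p)=\HH_d(\q)$.

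I do not anticipate a genuine obstacle; the argument is elementary linear algebra. The only things to be careful about are the bookkeeping of which point occupies which block of $H(\p)$ and, in part (2), noticing that a consistency condition on the row and column rescalings appears and checking that the obvious choice satisfies it.
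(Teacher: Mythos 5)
Your proof is correct and essentially identical to the paper's: part (1) is right multiplication by the block-diagonal matrix with the matrix of $l$ in each block, and part (2) scales the row $\{i,j\}$ by $\alpha_i\alpha_j$ and the columns of block $i$ by $\alpha_i^{-1}$, exactly the paper's row/column rescalings. Your explicit remark that $l$ must be invertible is a reasonable (and correct) reading of the statement that the paper leaves implicit.
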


\begin{proof}
	In the first case, the matrix $H(\q)$ is obtained from $H(\p)$ multiplying on the right by a block-diagonal matrix having (the matrix of) $l$ in each block. In the second case, it is obtained by first multiplying each row $(i,j)$ by $\alpha_i \alpha_j$ and then dividing each column in block $i$ by $\alpha_i$. Multiplying rows or columns by non-zero scalars does not change the matroid.
\end{proof}

This implies that $\HH_2$  coincides with $\PP_2$:

\begin{theorem}
	\label{thm:d=2}
	Let $\p=(\p_1,\dots,\p_n)$ be points in $(\R\setminus \{0\})^2$, with $\p_i=(a_i,b_i)$. Let $t_i = b_i/a_i$ for $i=1,\dots,n$.
	Then, $\HH_2(\p)$ equals $\PP_2(t_1,\dots,t_n)$. In particular,  $\HH_2(n) = \PP_2(n)$. 
\end{theorem}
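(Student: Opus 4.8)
The plan is to derive this as an immediate consequence of Lemma~\ref{lemma:invariant}(2) and part~(3) of Theorem~\ref{thm:main}. Since each coordinate $a_i$ is nonzero, I would first rescale the configuration, multiplying each $\p_i=(a_i,b_i)$ by the scalar $\alpha_i=1/a_i$ to obtain the configuration $\q$ of vectors $\q_i=(1,t_i)$. By Lemma~\ref{lemma:invariant}(2), scaling points by nonzero scalars leaves the hyperconnectivity matroid unchanged, so $\HH_2(\p)=\HH_2(\q)$.

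Next I would apply Theorem~\ref{thm:main} with $d=2$. Its third item asserts precisely that the $2$-hyperconnectivity matroid of the vectors $\{(1,t_i)\}_{i=1}^n$ equals the polynomial rigidity matroid $\PP_2(t_1,\dots,t_n)$. Concatenating the two identities yields $\HH_2(\p)=\HH_2(\q)=\PP_2(t_1,\dots,t_n)$, which is the main assertion of the theorem.

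For the ``in particular'' clause I would transfer genericity along the coordinatewise ratio map $\phi\colon(\R\setminus\{0\})^{2n}\to\R^n$, $\phi(\p)=(b_1/a_1,\dots,b_n/a_n)$. By definition $\HH_2(n)=\HH_2(\p)$ for a generic $\p\in(\R^2)^n$, and such a $\p$ has all coordinates nonzero, so the first part applies and gives $\HH_2(n)=\PP_2(\phi(\p))$. Since $\PP_2(t_1,\dots,t_n)$ is a linear matroid whose entries are polynomials in the $t_i$, independence of any fixed edge set is a Zariski-open condition on $(t_1,\dots,t_n)$; hence there is a dense open $U\subseteq\R^n$ on which $\PP_2(t_1,\dots,t_n)$ equals $\PP_2(n)$. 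The map $\phi$ is dominant (indeed surjective), so $\phi^{-1}(U)$ is dense, and a generic $\p$ lies in it; therefore $\PP_2(\phi(\p))=\PP_2(n)$ and $\HH_2(n)=\PP_2(n)$.

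There is no genuinely hard step here: all the content is already contained in Lemma~\ref{lemma:invariant} and Theorem~\ref{thm:main}. The only point that deserves a line of care — the ``main obstacle'', such as it is — is the last one: checking that a generic point configuration with nonzero coordinates produces a generic parameter list, which is exactly the dominance of $\phi$.
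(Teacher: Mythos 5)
Your proof is correct and takes essentially the same route as the paper's: the paper applies Lemma~\ref{lemma:invariant}(2) with $\alpha_i=b_i/a_i^2$ to land on the parabola $(t_i,t_i^2)$, while you use $\alpha_i=1/a_i$ to land on $(1,t_i)$ and invoke item (3) of Theorem~\ref{thm:main} directly --- an immaterial difference. Your genericity-transfer argument for the ``in particular'' clause is sound and simply spells out what the paper leaves implicit.
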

\begin{proof}
	Apply Lemma~\ref{lemma:invariant} with $\alpha_i = b_i/a_i^2$.
\end{proof}

Let $\R[t]_{< d}$ denote the $d$-dimensional vector space of univariate polynomials of degree less than $d$ with real coefficients.
For each choice of parameters $t_1,\dots,t_n$ and choice of one basis $F^{i}(t)=(f_1^{i}(t), \ldots, f_{d}^{i}(t))$ of $\R[t]_{< d}$ for each $i$, construct the following \emph{polynomial rigidity matrix of degree $d-1$}:
\[
P_{F^{1},\dots, F^{n}}(t_1,\dots,t_n):=
\begin{pmatrix}
F^{1}(t_2) & -F^{2}(t_1) & 0 & \dots & 0 & 0 \\
F^{1}(t_3) & 0 & -F^{3}(t_1) & \dots & 0 & 0 \\
\vdots & \vdots & \vdots & & \vdots & \vdots \\
F^{1}(t_n) & 0 & 0 & \dots & 0 & -F^{n}(t_1) \\
0 & F^{2}(t_3) & -F^{3}(t_2) & \dots & 0 & 0 \\
\vdots & \vdots & \vdots & & \vdots & \vdots \\
0 & 0 & 0 & \dots & F^{n-1}(t_n) & -F^{n}(t_{n-1})
\end{pmatrix}.
\]

\begin{lemma}
	\label{lemma:bases}
	The linear matroid of rows of $P_{F^{1},\dots, F^{n}}(t_1,\dots,t_n)$ is independent of the choice of bases $F^{1},\dots, F^{n}$.
\end{lemma}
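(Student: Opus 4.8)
The idea: changing the basis $F^i$ to another basis $\tilde F^i$ of $\R[t]_{<d}$ amounts to right-multiplying by an invertible $d\times d$ matrix $M_i$, i.e. $\tilde F^i(t) = F^i(t) M_i$ as row vectors (where $M_i \in GL_d(\R)$ is the change-of-basis matrix). The subtlety, compared to Lemma~\ref{lemma:invariant}, is that the block $i$ of the matrix $P$ appears in two guises: as $F^i(t_j)$ (with a $+$ sign, when $i$ is the smaller index of the row) and as $-F^i(t_j)$ (when $i$ is the larger index). But in every row where block $i$ is nonzero, it is evaluated with the \emph{same} basis $F^i$; only the evaluation point $t_j$ changes. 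So I want to say that $P_{\tilde F^1,\dots,\tilde F^n}$ is obtained from $P_{F^1,\dots,F^n}$ by right-multiplication with the block-diagonal matrix $\mathrm{diag}(M_1,\dots,M_n)$, which is invertible and hence does not change the matroid of rows.

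The steps, in order. First I would fix the convention: write each $F^i(t)$ as a row vector and a basis change as $\tilde F^i(t) = F^i(t)\,M_i$ with $M_i\in GL_d(\R)$; note this identity holds simultaneously for all values of $t$, in particular for all the $t_j$. Second, I would observe that the $i$-th block-column of $P_{F^1,\dots,F^n}$, restricted to any row, is either $0$, or $F^i(t_j)$, or $-F^i(t_j)$ for the appropriate $j$; in all cases right-multiplying that block-column by $M_i$ replaces $F^i(t_j)$ by $\tilde F^i(t_j)$ (and $-F^i(t_j)$ by $-\tilde F^i(t_j)$, and $0$ by $0$). Third, assembling these, right-multiplication of the whole matrix $P_{F^1,\dots,F^n}$ by the block-diagonal invertible matrix $\mathrm{diag}(M_1,\dots,M_n)$ yields exactly $P_{\tilde F^1,\dots,\tilde F^n}$. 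Fourth, since right-multiplication by an invertible matrix is a linear isomorphism of the column space and preserves all linear dependences among the rows, the linear matroid of rows is unchanged. By transitivity/connectedness of $GL_d$ (or simply because any two bases differ by such an $M_i$), the matroid is the same for every choice of bases.

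I do not expect a genuine obstacle here; the only thing to be careful about is the bookkeeping of signs and of which block a given evaluation lands in — that is, making sure the same $M_i$ works for the $+F^i(t_j)$ occurrences (rows indexed $(i,j)$ with $i<j$) and the $-F^i(t_j)$ occurrences (rows indexed $(j,i)$ with $j<i$), which it does because the basis attached to index $i$ is $F^i$ regardless of the row. A cosmetic alternative, if one prefers to avoid even mentioning signs, is to first record that negating a row of a matrix does not change the matroid, reducing to the all-$+$ situation; but this is not needed. One should also note for cleanliness that the lemma as stated presumes each $F^i$ is a basis (so each $M_i$ is invertible); no nondegeneracy of the $t_i$ is used at this point.
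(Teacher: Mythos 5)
Your proposal is correct and is essentially the paper's own argument: the paper likewise observes that a change of bases amounts to right-multiplying $P_{F^1,\dots,F^n}(t_1,\dots,t_n)$ by the invertible $nd\times nd$ block-diagonal matrix whose $i$-th block is the change-of-basis matrix from $F^i$ to the new basis, which leaves the row matroid unchanged. Your extra bookkeeping about the signs and about which rows involve block $i$ is fine but adds nothing beyond the paper's one-line proof.
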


\begin{proof}
	Given two choices of bases $(F^{1},\dots, F^{n})$ and $(G^{1},\dots, G^{n})$, the matrix corresponding to $G$ can be obtained from that of $F$ multiplying on the right by the  $nd\times nd$ block-diagonal matrix that has in the $i$-th diagonal block the $d\times d$ matrix that changes from basis $F^i$ to basis $G^i$.
\end{proof}

\begin{definition}
	\label{defi:poly}
	We denote this matroid $\PP_d(t_1,\dots,t_n)$ and call it the  \emph{polynomial rigidity matroid of degree $d-1$ with parameters $(t_1,\dots,t_n)$}.
\end{definition}

Theorem~\ref{thm:main} follows from the following more precise statement:

\begin{theorem}
	The three matroids of Theorem~\ref{thm:main} coincide with 
	$\PP_d(t_1,\dots,t_n)$.
\end{theorem}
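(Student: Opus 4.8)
The plan is to prove the three claimed identifications one at a time, in each case exhibiting an explicit choice of bases $F^1,\dots,F^n$ of $\R[t]_{<d}$ (using Lemma~\ref{lemma:bases}, which tells us the matroid does not depend on this choice) so that the polynomial rigidity matrix specializes, up to harmless row and column operations, to the matrix $H(\p)$, $C_d(\p)$, or (a transform of) $R(\p)$ respectively.

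\medskip
\textbf{Hyperconnectivity.} This is the easiest case. Take $F^i(t) = (1,t,t^2,\dots,t^{d-1})$ \emph{independently of $i$}, i.e.\ the monomial basis. Then $F^i(t_j) = (1,t_j,\dots,t_j^{d-1})$ is exactly the vector $\p_j$ of the $d$-hyperconnectivity configuration $\{(1,t_i,\dots,t_i^{d-1})\}$, so $P_{F^1,\dots,F^n}(t_1,\dots,t_n)$ literally equals $H(\p)$ for that $\p$. Hence $\PP_d(t_1,\dots,t_n) = \HH_d((1,t_i,\dots,t_i^{d-1})_i)$. The final clause — that this is equivalent to $\HH_d((t_i,\dots,t_i^{d})_i)$ when no $t_i = 0$ — follows from Lemma~\ref{lemma:invariant}(2) by scaling each vector by $\alpha_i = t_i$.

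\medskip
\textbf{Cofactor rigidity.} Here I want to match $P_{F^1,\dots,F^n}$ with $C_d(\p)$ for $\p_i = (t_i, t_i^2)$ on the parabola. The entries of $C_d(\p)$ are $c(\p_i - \p_j) = c(t_i - t_j,\ t_i^2 - t_j^2) = (t_i-t_j)^{d-1}\, c(1,\ t_i+t_j)$, using that $t_i^2 - t_j^2 = (t_i-t_j)(t_i+t_j)$ and that $c(x,y)$ is homogeneous of degree $d-1$. So, dividing the row indexed by $\{i,j\}$ by $(t_i - t_j)^{d-1}$ (a nonzero scalar, allowed by Lemma~\ref{lemma:invariant}-style row scaling), the $i$-th block of that row becomes $c(1, t_i+t_j) = \big((t_i+t_j)^{d-1},(t_i+t_j)^{d-2},\dots,1\big)$ reversed, and the $j$-th block its negative. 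This is \emph{not} yet of the form $F^i(t_j)$, because the entry depends on $t_i + t_j$, not on $t_j$ alone; the trick is that $(1, s, s^2, \dots, s^{d-1})$ evaluated at $s = t_i + t_j$ expands, via the binomial theorem, as a fixed linear combination (with coefficients depending only on $t_i$) of $(1, t_j, t_j^2, \dots, t_j^{d-1})$. Concretely $c(1, t_i + t_j)$ is the image of $(1, t_j, \dots, t_j^{d-1})$ under the invertible linear map $T_{t_i}$ that sends $g(t) \mapsto g(t + t_i)$ composed with coordinate reversal — an invertible operator on $\R[t]_{<d}$ depending only on $i$. So, defining $F^i$ to be the basis whose coordinate functions are the components of $t \mapsto \big(\text{reverse of}\ c(1, t + t_i)\big)$ — equivalently $F^i_k(t) = $ the $k$-th component — we get that the scaled $C_d(\p)$ equals $P_{F^1,\dots,F^n}(t_1,\dots,t_n)$ up to column operations absorbing the reversal. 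Invoking Lemma~\ref{lemma:bases}, $\PP_d(t_1,\dots,t_n) = \CC_{d-1}^{d-2}((t_i,t_i^2)_i)$.

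\medskip
\textbf{Bar-and-joint rigidity.} This is the step I expect to be the main obstacle, because $R(\p)$ has $nd$ columns arranged in blocks of size $d$ in $\R^d$, whereas $P$ has blocks of size $d$ coming from $\R[t]_{<d}$, and the row of $R(\p)$ indexed by $\{i,j\}$ has entries $\p_i - \p_j$ and $\p_j - \p_i$ with $\p_i = (t_i, t_i^2, \dots, t_i^d)$, which superficially looks symmetric rather than of the asymmetric form $F^i(t_j)$, $-F^j(t_i)$. The resolution is to perform column operations block by block — within block $i$, replace the coordinate vector $(x_1,\dots,x_d)$ by a new basis adapted to $t_i$ — together with row scalings by powers of $(t_i - t_j)$, to bring $R(\p)$ into the $P$-shape. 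The key algebraic identity to exploit is that $t_i^k - t_j^k = (t_i - t_j)(t_i^{k-1} + t_i^{k-2} t_j + \dots + t_j^{k-1})$, so after dividing row $\{i,j\}$ by $(t_i - t_j)$ the $k$-th coordinate in block $i$ becomes $\sum_{\ell=0}^{k-1} t_i^{k-1-\ell} t_j^\ell$, a polynomial of degree $\le d-1$ in $t_j$ with coefficients depending on $t_i$ (and vice versa in block $j$, with an overall sign). I then need to check that the resulting family of block-$i$ vectors, as $j$ ranges, spans via coefficient functions a basis $F^i$ of $\R[t]_{<d}$ depending only on $i$ — this amounts to showing a certain $d \times d$ matrix built from the "divided difference" expansion is invertible, which it is because it is (a variant of) the matrix of the shift/truncation operator and is triangular with nonzero diagonal. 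Once that linear-algebra bookkeeping is done, $R(\p)$ becomes $P_{F^1,\dots,F^n}(t_1,\dots,t_n)$ up to the allowed operations, and Lemma~\ref{lemma:bases} finishes it. The delicate points are (i) getting the signs and the asymmetry $F^i(t_j)$ vs.\ $-F^j(t_i)$ to come out consistently after the row scalings — one may need to also rescale columns or absorb a sign into the definition of $F^j$ — and (ii) confirming invertibility of the relevant change-of-basis matrices uniformly in $i$; both are mechanical but must be set up carefully.
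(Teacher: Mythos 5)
Your proposal is correct and follows essentially the same route as the paper: the three bases you construct are exactly the paper's choices (the monomial basis for hyperconnectivity, $(t_i+t)^{k-1}$ for cofactor on the parabola, and the divided differences $\frac{t_i^k-t^k}{t_i-t}$ for bar-and-joint on the moment curve), combined with the same row scalings by $t_i-t_j$ and $(t_i-t_j)^{d-1}$ and an appeal to Lemma~\ref{lemma:bases}. The sign/asymmetry issue you flag in the bar-and-joint case in fact needs no column operations or extra bookkeeping: since $\frac{t_i^k-t_j^k}{t_i-t_j}$ is symmetric in $i$ and $j$ (as is $(t_i+t_j)^{k-1}$ in the cofactor case), block $j$ of the scaled row is automatically $-F^{j}(t_i)$ for the same basis $F^{j}$ used elsewhere.
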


\begin{proof}
	Consider the following bases of $\R[t]_{< d}$, depending on $t_i$:
	\[
	F^{t_i}(t)=\left(\frac{t_i^k-t^k}{t_i-t}\right)_{k=1,\dots,d},
	\qquad
	G^{t_i}(t)=
	\left((t_i+t)^{k-1}\right)_{k=1,\dots,d}.
	\]
	Then, $P_{F^{t_1},\dots, F^{t_n}}(t_1,\dots,t_n)$ equals the bar-and-joint rigidity matrix of points along the moment curve, except each row $(i,j)$ has been divided by $t_i-t_j$, which does not affect the matroid. Similarly, $P_{G^{t_1},\dots, G^{t_n}}(t_1,\dots,t_n)$ equals the cofactor rigidity matrix of points along the parabola, except each row $(i,j)$ has been divided by $(t_i-t_j)^{d-1}$. For the latter, observe that for points $(x_i,y_i)=(t_i, t_i^2)$
	along the parabola we have
	\[
	(x_i-x_j)^{d-k}(y_i-y_j)^{k-1}=
	(t_i-t_j)^{d-k}(t_i^2-t_j^2)^{k-1}=(t_i-t_j)^{d-1} (t_i+t_j)^{k-1}.
	\]
	Finally, the hyperconnectivity matrix of $\{(1,\dots,t_i^{d-1})\}_{i\in [n]}$ equals $P_{F,\dots, F}(t_1,\dots,t_n)$ where we choose all bases equal to the monomial basis $F=(1,t,t^2,\dots,t^{d-1})$.
\end{proof}

\section{Hyperconnectivity and low rank skew-symmetric matrices}
\label{sec:algebraic}

Recall that the \emph{algebraic matroid} of an irreducible variety $V\subset \R^N$  is the matroid with ground set $N$ and in which a subset of coordinates is independent if there is no (non-trivial) polynomial relation among them on $V$. Put differently, $X$ is independent if $I(V) \cap \R[X] = 0$. 
When $V$ is parametrized by a polynomial map $T:\R^M \to V\subset \R^N$, the algebraic matroid of $V$ equals the linear matroid of rows of the Jacobian of $T$ at a sufficiently generic point of $\R^M$~\cite[Proposition 2.5]{Rosen}. (It is important  that we are looking at algebraic varieties over $\R$. Algebraic matroids over finite fields are not all representable as linear matroids over those same  fields).

We are interested in the variety $S_d(n)\subset \R^{\binom{n}2}$ of skew-symmetric matrices of rank (at most) $d$, where each matrix is represented as the list of its entries with $1\le i < j \le n$. We denote
$\JJ_d(n)$ its algebraic matroid and we assume $d$ even, since every skew-symmetric matrix has even rank. 
The following statement seems to be known, but we have not found an explicit proof of it.

\begin{proposition}
	\label{prop:H_equals_S}
	For even $d$, $\HH_d(n) = \JJ_d(n)$.
\end{proposition}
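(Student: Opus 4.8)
The plan is to realize both matroids as linear matroids of the rows of the same (generic) matrix, up to harmless row/column scalings. The variety $S_d(n)$ of skew-symmetric $n\times n$ matrices of rank at most $d$ (with $d=2m$ even) is irreducible and is parametrized by $X = A^TJA$ where $A$ is a $d\times n$ matrix and $J$ is the standard block-diagonal skew form; equivalently, writing the columns of $A$ as $\p_1,\dots,\p_n\in\R^d$, the $(i,j)$-entry of $X$ is the symplectic pairing $\omega(\p_i,\p_j)=\p_i^TJ\p_j$. By \cite[Proposition 2.5]{Rosen}, $\JJ_d(n)$ is the linear matroid of the rows of the Jacobian of this parametrization, evaluated at a generic choice of $\p_1,\dots,\p_n$; the ground set $\binom{[n]}2$ corresponds to the entries $x_{ij}$, and the rows are indexed by the parameters, i.e. by the $nd$ coordinates of the $\p_k$'s. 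So the Jacobian is an $nd\times\binom n2$ matrix whose transpose I will compare with $H(\p)$.

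The key computation is that $\partial x_{ij}/\partial \p_k$ is nonzero only for $k\in\{i,j\}$, and equals $J\p_j$ when $k=i$ and $-J\p_i$ when $k=j$ (using skew-symmetry of $\omega$). Hence the transpose of the Jacobian, viewed as a $\binom n2\times nd$ matrix with rows indexed by pairs $\{i,j\}$ and columns grouped into $n$ blocks of size $d$, has in row $\{i,j\}$ exactly $J\p_j$ in block $i$ and $-J\p_i$ in block $j$ and zeros elsewhere. This is precisely $H(\p)$ after right-multiplication of every length-$d$ block of columns by the fixed invertible matrix $J$; since multiplying all columns in each block by a common invertible matrix does not change the row matroid (this is the argument already used in Lemma~\ref{lemma:bases} and Lemma~\ref{lemma:invariant}), the row matroid of the Jacobian equals $\HH_d(\p)$ for generic $\p$. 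But $\HH_d(\p)=\HH_d(n)$ for generic $\p$ by definition, and the Jacobian row matroid at a generic point is $\JJ_d(n)$; therefore $\HH_d(n)=\JJ_d(n)$.

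The one point that needs a little care — and which I expect to be the main obstacle — is matching the two notions of ``generic'': $\JJ_d(n)$ is defined via the Jacobian at a sufficiently generic point of the \emph{parameter} space $\R^{nd}$ (the space of $\p$'s), while $\HH_d(n)$ is the hyperconnectivity matroid at a generic \emph{point configuration}. These coincide because the map sending $\p$ to the Jacobian matrix (equivalently to $H(\p)$, up to the fixed block change of coordinates $J$) is polynomial in $\p$, so the set of $\p$ achieving the maximal rank on any fixed subset of rows is Zariski-open; intersecting over all subsets, a single generic $\p$ simultaneously computes both matroids. I would also remark that the parametrization $\p\mapsto A^TJA$ is dominant onto $S_d(n)$ (its image is dense, since a generic rank-$d$ skew matrix factors this way), which is what legitimizes using it in \cite[Proposition 2.5]{Rosen}; and that irreducibility of $S_d(n)$ follows from it being the image of an irreducible variety under a polynomial map. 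With these observations the proof is complete.
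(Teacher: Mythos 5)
Your argument is correct and is essentially the paper's proof: both parametrize the rank-$\le d$ skew-symmetric matrices by a polynomial map (your $A\mapsto A^TJA$ is the paper's sum $\sum_l(\ab_l^T\bb_l-\bb_l^T\ab_l)$ written via the standard symplectic form), invoke Rosen's Proposition 2.5, and identify the Jacobian with the hyperconnectivity matrix $H(\p)$. The only cosmetic difference is that you keep the twist by $J$ as an invertible block-diagonal column operation, whereas the paper absorbs it into the definition of the points $\p_i=(b_{1,i},-a_{1,i},\dots,b_{k,i},-a_{k,i})$; your extra remarks on dominance and on matching the two notions of genericity are fine and implicit in the paper.
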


\begin{proof}
	Let $d=2k$.
	Every skew-symmetric matrix of rank $\le2k$ can be expressed as 
	\[
	\sum_{l=1}^{k}(\ab_l^T \bb_l - \bb_l^T \ab_l)
	\] 
	for some $\ab_1,\bb_1, \dots, \ab_k,\bb_k\in \R^n$ (see, e.g., \cite[Lemma 1.3]{IMA} or \cite[Theorem 3.23]{PacStu}). 
	Thus, writing $\ab_l=(a_{l,1},\dots,a_{l,n})$ and $\bb_l=(b_{l,1},\dots,b_{l,n})$ we have a surjective map
	\begin{align}
		T: (\R^n)^{2k} &\to S_{2k}(n) \subset \R^{\binom{n}2} \nonumber\\
		(\ab_1,\bb_1,\dots,\ab_k,\bb_k) &\mapsto \sum_{l=1}^k \left(a_{l,i}b_{l,j}-a_{l,j}b_{l,i}\right)_{1\le i<j\le n}.
		\label{eq:T}
	\end{align}
	
	For each  $(\ab_l,\bb_l)_{l=1,\dots,k}\in (\R^n)^{2}$, we denote by $J((\ab_l,\bb_l)_{l=1,\dots,k})$ the Jacobian matrix of $T$ at that point; hence, $\JJ_{2k}(n)$ equals the linear matroid of rows of $J((\ab_l,\bb_l)_{l=1,\dots,k})$ for a sufficiently generic choice of $(\ab_l,\bb_l)_{l=1,\dots,k}$. 
	Now, from Eq.~\eqref{eq:T}
	it is straightforward to check that $J((\ab_l,\bb_l)_{l=1,\dots,k})$ is the hyperconnectivity matrix of $(\p_1,\dots,\p_n)$ where
	\[
	\p_i=(b_{1,i}, -a_{1,i},\dots, b_{k,i}, -a_{k,i}).
	\]
	Thus, $\HH_{2k}(n)$ also equals the linear matroid of rows of $J((\ab_l,\bb_l)_{l=1,\dots,k})$ for a sufficiently generic choice of $(\ab_l,\bb_l)_{l=1,\dots,k}$.
\end{proof}

Observe that for $d=2$ the map $T$ in the above proof is the Pl\"ucker embedding of $Gr(2,\R)$ into $\R^{\binom{n}2}$. Hence, $\HH_2(n)=\PP_2(n)$ is also the algebraic matroid of $Gr(2,\R)$.

\section{The generic case}
\label{sec:discussion}

\subsection{The three generic rigidity matroids}

Let $\RR_d(n)$, $\HH_d(n)$ and $\CC_{d-1}^{d-2}(n)$ denote the \emph{generic} bar-and-joint, hyperconnectivity  and  cofactor rigidity matroids of dimension $d$ for $n$ points; that is, the matroids $\RR_d(\p)$, $\HH_d(\p)$ and $\CC_{d-1}^{d-2}(\q)$ for  sufficiently generic choices of $\p\in (\R^{d})^n$ and $\q\in (\R^{2})^n$. 
There are the following  conjectures relating them:

\begin{conjecture}[\protect{\cite[Conjecture 11.5.1]{Whiteley}}]\ 
	\label{conj:maximalC}
	$\CC_{d-1}^{d-2}(n)$ is the  freest matroid in which  every  $K_{d+2}$ is a circuit.
	In particular, it is the freest among all abstract rigidity matroids of dimension $d$. 
\end{conjecture}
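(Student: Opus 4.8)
The natural strategy is the following. Recall that $\CC_{d-1}^{d-2}(n)$ being freer than a matroid $N$ on $\binom{[n]}2$ means exactly that every independent set of $N$ is independent in $\CC_{d-1}^{d-2}(n)$, equivalently that every circuit of $\CC_{d-1}^{d-2}(n)$ is dependent in $N$. So I would fix an arbitrary matroid $N$ on $\binom{[n]}2$ in which every copy of $K_{d+2}$ is a circuit, take a circuit $C$ of $\CC_{d-1}^{d-2}(n)$, and try to exhibit a certificate that $C$ is dependent in $N$. Passing to the restriction of both matroids to the vertex set spanned by $C$ (a restriction of an abstract rigidity matroid to a vertex subset is again one, since the relevant $K_{d+2}$'s stay circuits), I may assume $C$ is a spanning circuit. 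The ``in particular'' clause is then immediate, since every $d$-dimensional abstract rigidity matroid in the sense of Graver has each $K_{d+2}$ as a circuit (Nguyen \cite{Nguyen}).

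The heart of the plan is a purely combinatorial structural description of the circuits of $\CC_{d-1}^{d-2}(n)$: I would try to prove that every such circuit is obtained from a single copy of $K_{d+2}$ by a finite sequence of the standard rigidity-preserving operations used to generate rigidity circuits --- the Henneberg-type $k$-extensions and the gluing ($(d+1)$-sum) of two circuits along a copy of $K_{d+1}$ --- each of which should be shown to send circuits of any $d$-dimensional abstract rigidity matroid to dependent sets of the abstract rigidity matroid on the enlarged vertex set. Granting such an ``inductive construction of cofactor circuits'', the dependence of $C$ in $N$ would follow by induction on the number of operations, the base case being that $K_{d+2}$ is an $N$-circuit by hypothesis, and then $\CC_{d-1}^{d-2}(n)$ would be freest as claimed.

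The main obstacle --- and the reason this remains a conjecture --- is precisely this structural step. Inductive-construction characterizations of rigidity circuits are classical for $d=2$, where $\CC_1^0(n)$ coincides with the generic $2$-dimensional bar-and-joint matroid and Henneberg/Laman theory applies, but they are wide open already for generic $d$-dimensional bar-and-joint rigidity when $d\ge 3$, and no such characterization is known for cofactor rigidity in general; moreover one must separately check that the chosen list of operations preserves dependence in \emph{every} abstract rigidity matroid, not just in the generic (representable) one. A conceivable substitute is to exploit the explicit polynomial coordinatization of $\CC_{d-1}^{d-2}$ --- the matrix $C_d(\p)$, or its reparametrization $P_{G^{t_1},\dots,G^{t_n}}(t_1,\dots,t_n)$ of Section~\ref{sec:proof} --- to analyze the cofactor stresses of $C$ directly and write each of them in terms of stresses supported on $K_{d+2}$-subgraphs; but transferring such a representability statement to an arbitrary, possibly non-representable, matroid $N$ is exactly where the difficulty concentrates. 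I would therefore expect a complete proof to require either a new combinatorial characterization of $\CC_{d-1}^{d-2}(n)$ or a genuinely new idea, and, short of that, would aim first at settling the case $d=2$ and at recording the reduction above as the general framework.
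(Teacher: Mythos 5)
The statement you were asked about is not a theorem of this paper: it is Conjecture~\ref{conj:maximalC}, quoted from Whiteley, and the paper offers no proof of it --- it only records its status (true for $d=2$ via the Laman characterization, proved for $d=3$ by Clinch, Jackson and Tanigawa~\cite{CJT}, open for $d\ge 4$). So there is nothing in the paper to compare your argument against, and, to your credit, you do not claim to have a proof: your text is a reduction plus an explicit acknowledgement of the missing ingredient. The reduction itself is sound. Freeness of $\CC_{d-1}^{d-2}(n)$ over a matroid $N$ in which every $K_{d+2}$ is a circuit is indeed equivalent to every circuit of $\CC_{d-1}^{d-2}(n)$ being dependent in $N$, and the ``in particular'' clause does follow from Nguyen's theorem, since every abstract rigidity matroid of dimension $d$ has all copies of $K_{d+2}$ as circuits.

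The genuine gap is exactly where you place it, but it is worth naming how large it is. Your plan needs (i) an inductive construction generating \emph{all} circuits of $\CC_{d-1}^{d-2}(n)$ from copies of $K_{d+2}$ by a fixed list of operations, and (ii) a proof that each operation preserves dependence in an \emph{arbitrary} matroid on $\binom{[n]}{2}$ whose only assumed property is that every $K_{d+2}$ is a circuit --- not merely in representable or generic rigidity matroids. Neither piece is available for general $d$, and (ii) is delicate even when (i) is known: operations like the $(d+1)$-sum are usually justified via stress or representation arguments that do not transfer to abstract matroids, so one must argue purely with circuit axioms. Two small factual corrections to your closing plan: the case $d=2$ is not an open target --- it follows from Laman's theorem, as the paper notes, since $\CC_1^0(n)=\RR_2(n)$ --- and the case $d=3$ has already been settled in~\cite{CJT}, so the first genuinely open instance is $d=4$. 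Your proposal is a reasonable framework, but as a proof of the conjecture it is, unavoidably, incomplete.
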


\begin{conjecture}[\protect{\cite[Conjecture~6.6(a)]{JT}}]\ 
	\label{conj:maximalH}
	$\HH_{d}(n)$ is the  freest matroid in which every $K_{d+2}$ and every $K_{d+1,d+1}$ are circuits.
\end{conjecture}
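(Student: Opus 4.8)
The statement asserts that $\HH_d(n)$ is the freest matroid on $\binom{[n]}{2}$ in the class $\mathcal{K}$ of matroids in which every copy of $K_{d+2}$ and every copy of $K_{d+1,d+1}$ is a circuit, so the plan splits along the two parts of that assertion. First, one checks $\HH_d(n)\in\mathcal{K}$: every $K_{d+2}$ is a circuit in any $d$-dimensional abstract rigidity matroid, in particular in $\HH_d(n)$ (property~(2) of the introduction, applicable because points on the moment curve are generic), and every $K_{d+1,d+1}$ is a circuit in $\HH_d(n)$ by the bipartite (Bolker--Roth-type) argument recalled in Lemma~\ref{lemma:complete_bipartite} and Corollary~\ref{coro:complete_bipartite}. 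This second fact is a genuine feature of hyperconnectivity, not something the rank count forces: on $2(d+1)$ vertices the generic rank $\tfrac{3}{2}d(d+1)$ of $\HH_d$ is already at least the edge count $(d+1)^2$ of $K_{d+1,d+1}$. Second, and this is the real content, one must prove maximality: for every $M\in\mathcal{K}$, every $M$-independent set is $\HH_d(n)$-independent.

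The standard way to obtain such a maximality statement is to push it down to the level of circuits: it suffices to show that \emph{every circuit of $\HH_d(n)$ is dependent in every $M\in\mathcal{K}$}, equivalently that the circuits of $\HH_d(n)$ all lie in the smallest family of subsets of $\binom{[n]}{2}$ that contains every $K_{d+2}$ and every $K_{d+1,d+1}$ and is closed under circuit elimination. I would try to establish this inductively, imitating the combinatorial theory of generic bar-and-joint rigidity: first isolate a repertoire of Henneberg-type moves (vertex additions of controlled degree, edge splits, gluing along a $K_{d+1}$) that provably preserve independence in $\HH_d(n)$ and generate every $\HH_d(n)$-independent graph from trivial ones; then dualize these into reduction moves that peel an arbitrary circuit of $\HH_d(n)$ down to the two forbidden families using only circuit-elimination steps, which are available in any $M\in\mathcal{K}$. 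For $d=2$ this looks within reach, because $\HH_2(n)$ has the explicit combinatorial model recalled in the introduction — independence is equivalent to the existence of an acyclic orientation with no alternating closed walk, following~\cite{Bernstein} — and that model should supply both the generation theorem and the control over circuits.

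A complementary, more algebraic line of attack goes through Proposition~\ref{prop:H_equals_S}: for even $d$, $\HH_d(n)=\JJ_d(n)$ is the algebraic matroid of the variety of skew-symmetric $n\times n$ matrices of rank at most $d$ (the odd case would need a separate determinantal model). Circuits of $\HH_d(n)$ then correspond to minimal subsets of matrix entries supporting a polynomial relation on that variety, and one would aim to show each such relation is ``matroidally generated'' by the $(d+2)\times(d+2)$-minor pattern (responsible for $K_{d+2}$) and the rank/Pfaffian pattern (responsible for $K_{d+1,d+1}$). This reframes the conjecture as a statement about the defining ideal of the determinantal variety and its interaction with the combinatorics of entry supports; the delicate point is that ideal-theoretic generation does not automatically translate into generation in the matroid sense.

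The main obstacle, common to both routes, is the absence of a workable description of the circuit space of $\HH_d(n)$ for general $d$, together with the fact that no Henneberg-type generation theorem for hyperconnectivity independence is known; even the weaker Question~\ref{question:equal}, whether $\PP_d(n)=\HH_d(n)$ — which would at least hand us the alternative polynomial model of this paper to compute with — is open. I would therefore expect a complete proof only for $d=2$, using the combinatorial characterization above, plus perhaps a computer-assisted verification for small $n$ in low dimensions; the general case seems to require genuinely new structural input, likely the same input needed to settle Conjecture~\ref{conj:maximalC} and the ordering in Conjecture~\ref{conj:freer}, with which the present statement is closely intertwined.
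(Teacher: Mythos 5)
You have not produced a proof, and none should be expected: the statement you are addressing is a \emph{conjecture} (attributed to Jackson and Tanigawa, \cite[Conjecture~6.6(a)]{JT}), and the paper does not prove it — it explicitly records that it is open even for $d=2$, where the only progress is the conditional statement at the end of Section~\ref{sec:discussion} (via \cite[Theorem~6.9]{JT}, the case $d=2$ holds if and only if every dependent flat of the freest matroid in the class is a union of copies of $K_4$ and $K_{3,3}$). Your first paragraph, verifying that $\HH_d(n)$ does lie in the class $\mathcal{K}$ (every $K_{d+2}$ and every $K_{d+1,d+1}$ a circuit), is correct and is exactly what the paper's Lemma~\ref{lemma:complete_bipartite} and Corollary~\ref{coro:complete_bipartite} supply; but that is the easy half. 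The entire content of the conjecture is the maximality half, and for that you offer only a programme: a Henneberg-type generation theorem for $\HH_d$-independence that is not known to exist, and a reduction of circuits of $\HH_d(n)$ to the families $K_{d+2}$, $K_{d+1,d+1}$ under circuit elimination.

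Two concrete weak points in the programme itself. First, the reduction you call ``equivalent'' is not: to show $M\le\HH_d(n)$ for all $M\in\mathcal{K}$ it suffices that every circuit of $\HH_d(n)$ be dependent in every $M\in\mathcal{K}$, but demanding that every such circuit lie in the closure of $\{K_{d+2},K_{d+1,d+1}\}$ under circuit elimination is a strictly stronger (and unproved) structural claim, and the two should not be conflated. Second, your optimism that $d=2$ is ``within reach'' via Bernstein's acyclic-orientation model conflicts with the paper's own assessment that Conjecture~\ref{conj:maximalH} is open precisely for $d=2$ despite that model being available; the moves known to preserve $\HH_2$-independence (0-extensions, vertex and diamond splits, Propositions~\ref{prop:split} and~\ref{prop:diamond}) do not generate all independent graphs, which is exactly the missing ingredient. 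In short: your proposal is an honest research plan, not a proof, and the statement remains a conjecture both in the paper and after your attempt.
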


Here, we say that  \emph{$\NN$ is freer than $\MM$} and write $\MM \le \NN$, where $\NN$ and $\MM$ are two matroids on the same ground, if every independent set of $\MM$ is also independent in $\NN$. Equivalently, if the rank function of $\MM$ is bounded above by that of $\NN$. 

Of course, Conjecture \ref{conj:maximalC} would imply that $\CC_{d-1}^{d-2}(n)$ is freer than both of $\RR_d(n)$ and $\HH_d(n)$. In turn, a positive answer to \cite[Problem 3]{Kalai-shifting} would imply  $\RR_d(n)$ is freer than  $\HH_d(n)$ (see \cite[Section 2.7]{Kalai-shifting} for the reason of the implication), so we consider this a conjecture as well:

\begin{conjecture}[Kalai and Whiteley]
	\label{conj:freer}
	For every $n$ and $d$,
	\[
	\HH_d(n) \le \RR_d(n) \le \CC_{d-1}^{d-2}(n).
	\]
\end{conjecture}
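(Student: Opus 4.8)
The plan is to prove the two inequalities $\HH_d(n)\le\RR_d(n)$ and $\RR_d(n)\le\CC_{d-1}^{d-2}(n)$ separately, each by realizing one rigidity matrix as a sufficiently generic specialization of the other. Throughout, I use two standard facts: $\MM\le\NN$ is equivalent to $\rank_\MM\le\rank_\NN$ pointwise on $\binom{[n]}{2}$ (equivalently, every $\MM$-independent set is $\NN$-independent); and, for a family of matroids represented by a matrix whose entries depend polynomially on parameters, the generic member is freer than every special member (upper semicontinuity of rank). The latter already yields, from Theorem~\ref{thm:main}, that $\PP_d(n)\le\HH_d(n)$, $\PP_d(n)\le\RR_d(n)$ and $\PP_d(n)\le\CC_{d-1}^{d-2}(n)$: all three generic matroids dominate their common restriction to the moment curve. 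This does not order them, which is the whole point, so each direction needs a genuine refinement.

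For $\RR_d(n)\le\CC_{d-1}^{d-2}(n)$ — the Whiteley half, a consequence of Conjecture~\ref{conj:maximalC} — the route is to prove that $\CC_{d-1}^{d-2}(n)$ is the freest abstract rigidity matroid of dimension $d$. Since $\RR_d(n)$ has every $K_{d+2}$ as a circuit, Nguyen's theorem~\cite{Nguyen} makes it an abstract rigidity matroid, hence dominated by $\CC_{d-1}^{d-2}(n)$ if the latter is freest. The plan is to exploit the bivariate-spline meaning of $C_d(\q)$: given any isostatic framework $G$ of a competing abstract rigidity matroid, one builds a planar configuration $\q$, on a curve if needed, whose cofactor stresses are forced to vanish on $G$, using the freedom in the spline directions to cover all such $G$. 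Theorem~\ref{thm:main} is the base case of this on the parabola; the task is to deform $\q$ off the parabola while tracking how the associated $d$-dimensional bar-and-joint matrix changes and keeping the comparison of stress spaces under control.

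For $\HH_d(n)\le\RR_d(n)$ there are two complementary approaches. The combinatorial one follows Kalai's reduction~\cite[Section~2.7]{Kalai-shifting}: a positive answer to \cite[Problem~3]{Kalai-shifting} on exterior (algebraic) shifting implies the inequality, so one would try to show that exterior shifting dominates the $\RR_d$-independence count. The geometric one imitates Theorem~\ref{thm:bipartite}, where $\HH_d(n)$ restricted to bipartite graphs is realized as $\RR_d(\p)$ with the two colour classes on two generic affine hyperplanes. For arbitrary graphs one would seek a single configuration $\p^\circ\in(\R^d)^n$ — points on a generic flag of affine subspaces, say, or points on the moment curve perturbed generically in the last coordinate — together with a coning trick that absorbs the structural difference between the blocks $(\p_j,-\p_i)$ of a row of $H(\p)$ and the blocks $(\p_i-\p_j,\p_j-\p_i)$ of $R$, so that $H(\p)$ becomes, after scaling rows and columns, a submatrix of $R(\p^\circ)$. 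One would then conclude $\HH_d(n)=\HH_d(\p)\le\RR_d(\p^\circ)\le\RR_d(n)$, the last step by semicontinuity. Any such construction must in particular account for $K_{d+1,d+1}$, which the introduction singles out (via the Bolker--Roth phenomenon, Lemma~\ref{lemma:complete_bipartite}) as the graph distinguishing $\HH_d(n)$ from $\RR_d(n)$.

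The main obstacle, and the reason the statement remains only a conjecture, is twofold. First, for $d\ge3$ there is no combinatorial characterization of independence in any of the three matroids — unlike $d=2$, where Laman's theorem makes all of them coincide and the inequalities trivial — so ranks cannot be compared graph by graph, and one is forced into geometry. Second, the geometric route requires special configurations, or deformation families, along which two a priori unrelated rigidity matrices stay row/column equivalent; there is no evident reason that a generic bar-and-joint configuration is ``reachable'' from the moment curve, or a generic hyperconnectivity configuration from a degenerate bar-and-joint one, with this much control. A realistic intermediate target is to settle the first open case $d=3$, or to extend the inequalities to graph classes strictly larger than the bipartite ones of Corollary~\ref{coro:bipartite}, which is currently the strongest evidence available.
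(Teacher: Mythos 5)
The statement you are addressing is labelled a \emph{conjecture} in the paper (attributed to Kalai and Whiteley), and the paper contains no proof of it; it only records the known evidence, namely that the right-hand inequality would follow from Conjecture~\ref{conj:maximalC} (proved by Clinch--Jackson--Tanigawa only for $d=3$), that the left-hand inequality would follow from a positive answer to Kalai's shifting problem, and the paper's own new partial result that $\HH_d(n)\le\RR_d(n)$ holds when restricted to bipartite graphs (Theorem~\ref{thm:bipartite} and Corollary~\ref{coro:bipartite}). Your text is consistent with this status: it is a survey of plausible attack routes (specialization/semicontinuity from $\PP_d(n)$, the maximality route via Nguyen's theorem for $\RR_d\le\CC_{d-1}^{d-2}$, shifting or a geometric degeneration for $\HH_d\le\RR_d$), and you yourself state that the obstacles leave the statement a conjecture. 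So there is no proof here to validate, and none was to be expected; the genuine gap is simply that neither of your two directions is carried out, and the hard steps you would need (a deformation argument making $H(\p)$ a controlled specialization of some $R(\p^\circ)$ for all graphs, or a proof that $\CC_{d-1}^{d-2}(n)$ is the freest abstract rigidity matroid in all dimensions) are exactly the open problems.

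One factual correction: your claim that for $d=2$ ``Laman's theorem makes all of them coincide and the inequalities trivial'' is wrong. The paper states $\HH_2(n)<\RR_2(n)=\CC_1^0(n)$ with the left inequality \emph{strict}: Laman's characterization gives $\HH_2(n)\le\RR_2(n)$, but $K_{3,3}$ is a circuit in $\HH_2(n)$ (indeed $K_{d+1,d+1}$ is a circuit in $\HH_d(n)$ for every $d$) while it is a basis in $\RR_2(n)$, so the two matroids differ already for $n=6$. The correct way to phrase the $d=2$ situation is that Conjecture~\ref{conj:freer} is known there, not that it is vacuous. Your other background assertions (that $\PP_d(n)$ is dominated by all three generic matroids by semicontinuity, that Conjecture~\ref{conj:maximalC} implies $\RR_d(n)\le\CC_{d-1}^{d-2}(n)$ via Nguyen's theorem, and that the bipartite case is the strongest current evidence for the left inequality) agree with the paper.
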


All these  conjectures are trivial for $d=1$: the three matroids coincide with the graphic matroid, which is the only rigidity matroid in dimension one and satisfies that $K_{2,2}$ is a circuit. For higher dimensions the following is known:

For $d= 2$, Conjecture~\ref{conj:maximalH} is open (see partial results in \cite[Section 6.3.1]{JT}), but the rest hold true. Indeed,  Conjecture~\ref{conj:freer} follows from the more precise statement
\[
\HH_2(n) < \RR_2(n) = \CC_{1}^{0}(n).
\]
The equality on the right is trivial from the definitions, and the strict inequality on the left follows from combining Laman's characterization (which gives $\HH_2(n) \le \RR_2(n)$) with the fact that $K_{d+1,d+1}$ is a circuit in $\HH_d(n)$ for every $d$ while $K_{3,3}$ is a basis in $\RR_2(n)$. The Laman characterization implies also  Conjecture~\ref{conj:maximalC}.

In dimension $3$ or higher only the following is known:

\begin{itemize}
	
	\item  Conjecture~\ref{conj:maximalC} has recently been proven in dimension $3$ by Clinch, Jackson and Tanigawa~\cite{CJT},
	but is open in higher dimensions. 
	
	\item That result implies $\RR_3(n) \le \CC_{2}^{1}(n)$, and it is conjectured that $\RR_3(n) = \CC_{2}^{1}(n)$ \cite[Conjecture 10.3.2]{Whiteley}. In higher dimension we know that $\RR_d(n) \ne \CC_{d-1}^{d-2}(n)$, since  the $(d-4)$-th cone over $K_{6,7}$ is a basis in $\CC_{d-1}^{d-2}(n)$ and dependent in $\RR_d(n)$~\cite[Section 11]{Whiteley}, but  $\RR_d(n) \le \CC_{d-1}^{d-2}(n)$ is open.
	
	\item Whether $\HH_d(n)$ is less free than $\RR_d(n)$ and/or $\CC_{d-1}^{d-2}(n)$ is open even for $d=3$, but we do know graphs that are dependent in $\HH_d(n)$ and independent in the other two. For example,  no bipartite graph can be spanning in $\HH_d(n)$~(see Corollary~\ref{coro:complete_bipartite} below, which essentially follows \cite[Theorem 6.1]{Kalai}); hence,   $K_{d+1,\binom{d+1}2}$ is dependent in it, while it is a basis in  $\RR_d(n)$ and $\CC_{d-1}^{d-2}(n)$~\cite[Sect. 11]{Whiteley}.
	
\end{itemize}

A technique similar to the proof of Theorem~\ref{thm:main} gives us the following result, supporting the conjecture that $\HH_d(n) < \RR_d(n)$ for all $d\ge 3$:

\begin{theorem}
	\label{thm:bipartite}
	Let $V=X\dot\cup Y$ be a vertex set with a given bipartition and 
	let $\{\p_i : i\in V\} \subset \R^{d-1}$ be positions for the vertices. Then, restricted to bipartite graphs with that bipartition
	the following two $d$-dimensional rigidity matroids coincide:
	\begin{enumerate}
		\item the bar-and-joint  matroid of the points $\{(\p_i, 0): i\in X\} \cup \{(\p_j, 1):j\in Y\}$.
		\item the hyperconnectivity matroid of the points $\{(\p_i, 1): i\in V\}$.
	\end{enumerate}
\end{theorem}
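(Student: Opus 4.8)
The strategy mirrors the proof of Theorem~\ref{thm:main}: exhibit both matroids as the row matroid of a single matrix (up to row and column scaling and deletion of irrelevant columns), after an appropriate choice of bases. Write the bar-and-joint matrix of the points $\p_i^X=(\p_i,0)$ for $i\in X$ and $\p_j^Y=(\p_j,1)$ for $j\in Y$, but keep only the rows indexed by edges $\{i,j\}$ with $i\in X$, $j\in Y$ (these are the only edges of a bipartite graph with the given bipartition). The row of such an edge has, in block $i$, the vector $\p_i^X-\p_j^Y=(\p_i-\p_j,-1)\in\R^d$ and, in block $j$, its negative. So, up to sign, on the bipartite ground set the bar-and-joint matrix is the matrix whose $\{i,j\}$-row carries $(\p_i-\p_j,-1)$ in block $i$ and $(\p_i-\p_j,-1)$ in block $j$ (with a sign on one of the two blocks, which does not change the matroid). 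On the other side, the hyperconnectivity matrix of the points $\q_k=(\p_k,1)\in\R^d$ has, in the row of $\{i,j\}$ (say $i<j$), the vector $\q_j=(\p_j,1)$ in block $i$ and $-\q_i=(-\p_i,-1)$ in block $j$.

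First I would reconcile the two by a column change of basis that is \emph{different in the $X$-blocks and the $Y$-blocks}. In each block $i\in X$ I keep the standard basis of $\R^d$; in each block $j\in Y$ I apply the linear map $(\mathbf v,s)\mapsto(\mathbf v+s\p_j,\,s)$ (equivalently, I translate the last coordinate into the first $d-1$). Lemma~\ref{lemma:bases}-style reasoning — really just the observation from the proof of Lemma~\ref{lemma:invariant} that right-multiplication by a block-diagonal invertible matrix does not change the row matroid — shows this does not affect the matroid of either matrix, as long as I make the corresponding change consistently. Under such block-wise changes the hyperconnectivity row for $\{i,j\}$ becomes: in block $i$ the vector $(\p_j,1)$, and in block $j$ the image of $(-\p_i,-1)$, namely $(-\p_i-\p_j,-1)$. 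Scaling the whole row by $-1$ and the block-$i$ entry appropriately, I want to land on a matrix that also represents the (bipartite-restricted) bar-and-joint matroid after a parallel sequence of block operations on its columns. The cleanest route is to massage \emph{both} matrices into the common normal form in which the $\{i,j\}$-row has $(\p_i-\p_j,\,1)$ in block $i$ and $\pm(\p_i-\p_j,\,1)$ in block $j$: for the bar-and-joint matrix this is immediate after flipping the sign of the last coordinate; for the hyperconnectivity matrix it requires the affine shift described above plus the observation that $(\p_j,1)$ and $(-\p_i-\p_j,-1)$ differ, after the shift and a global sign, by vectors that agree up to the column operations allowed in each block. I would then check that exactly the same invertible block-diagonal column transformation carries one normal form to the other, so the two row matroids on the bipartite ground set coincide.

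A second, perhaps more transparent, way to organize the same computation — and the one I would actually write up — is to interpret \emph{both} matroids as a ``bipartite polynomial/affine rigidity matroid'': for each edge $\{i,j\}$ the row is built from a bilinear-type pairing of the data at $i$ and the data at $j$, and in both the bar-and-joint and the hyperconnectivity setup, once the $X$-vertices are placed on the hyperplane $\{x_d=0\}$ (resp. all vertices on $\{x_d=1\}$), this pairing factors through the affine functions $\mathbf x\mapsto\langle\mathbf a,\p_i-\p_j\rangle$ together with a constant term. Making the dictionary between the $X$-block coordinates and the $Y$-block coordinates explicit is exactly the content of the block-diagonal column change, and the point is that it is invertible (the shift $s\mapsto(\mathbf v+s\p_j,s)$ obviously is). The main obstacle I anticipate is bookkeeping rather than conceptual: getting the signs and the asymmetric roles of $X$ and $Y$ to line up correctly, and being careful that the column operations I apply in the $Y$-blocks of one matrix are the \emph{same} as those in the $Y$-blocks of the other (otherwise I would only prove the two matrices are individually well-behaved, not that they give the same matroid). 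Once the normal form is fixed and the single block-diagonal change of basis is written down, equality of the two row matroids — and hence Corollary~\ref{coro:bipartite}, that $\RR_d(n)$ is freer than $\HH_d(n)$ on bipartite graphs, via the specialization of generic positions to the two-hyperplane configuration — follows immediately.
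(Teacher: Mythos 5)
Your proposal is essentially the paper's own proof: the paper introduces an ``affine rigidity matrix'' in which each vertex $i$ carries its own basis $F^i$ of the $d$-dimensional space of affine functions on $\R^{d-1}$, observes (as in Lemma~\ref{lemma:bases}) that such per-vertex, block-diagonal invertible column changes do not alter the row matroid, and then realizes the bar-and-joint matrix (on bipartite rows) via $F^i(\x)=(\x-\p_i,1)$ for $i\in X$, $F^j(\x)=(\x-\p_j,-1)$ for $j\in Y$, and the hyperconnectivity matrix via $F^i(\x)=(\x,1)$ for all $i$ --- which is exactly your second, ``affine functions'' formulation and your ``single block-diagonal change of basis''. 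The one correction to your explicit bookkeeping is that keeping the standard basis on the $X$-blocks cannot suffice, since in block $i$ the entry $(\p_j,1)$ is not proportional to $(\p_i-\p_j,\pm1)$; instead take, say, $(\mathbf{v},s)\mapsto(s\p_i-\mathbf{v},\,s)$ on blocks $i\in X$ and $(\mathbf{v},s)\mapsto(\mathbf{v}-s\p_j,\,s)$ on blocks $j\in Y$, which sends the hyperconnectivity row $\bigl((\p_j,1),(-\p_i,-1)\bigr)$ to $\bigl((\p_i-\p_j,1),(\p_j-\p_i,-1)\bigr)$, i.e.\ the bar-and-joint row up to flipping the sign of the last coordinate in every block.
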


\begin{proof} 
	Let $E=X\times Y$ be the edge set of the complete bipartite graph $K_{X\times Y}$. 

	For each point $\p_i$, choose a linear basis $F^i=(f^i_{1}, \dots, f^i_{d})$ of the space of affine functions $f:\R^{d-1}\to \R$.	
	For each such choice of bases $\mathcal F = (F^i)_{i\in V}$ we define the following \textit{affine rigidity matrix}  of $(\mathcal F ,\p)$, of size $|E|\times |V|d$:  in the row indexed by the edge $(i,j)\in E$, put the vector $F^i(\p_j)$ in block $i$ , the vector $-F^j(\p_i)$  in block $j$, and zeroes elsewhere.
	
	With the same argument of Lemma \ref{lemma:bases}, the linear matroid of rows of this matrix is independent of the choice of bases. Now, we have that:
	\begin{itemize}
		\item The choice  $F^i(\x) = (\x-\p_i,1)$ for $i\in X$ and $F^j(\x) = (\x-\p_j,-1)$ for $j\in Y$ makes the row of edge $(i,j)$ have the vector $(\p_j-\p_j,1)$ in block $i$ and the vector $(\p_i-\p_j,-1)$ in block $j$.
		\item The choice  $F^i(\x) = (\x,1)$ for every $i$ makes the row of edge $(i,j)$ have the vector $(\p_j, 1)$  in block $i$  and the vector $(-\p_i, -1)$  in block $j$.
		\qedhere
	\end{itemize}
\end{proof}

Observe that in this proof we only use bipartiteness for the construction in $\RR_d$, but not for $\HH_d$. That is, the \emph{affine rigidity matroid} of the point set $\{\p_1,\dots,\p_n\}$ constructed in the proof coincides with $\HH_d(\{(\p_i,1)\}_{i=1,\dots,n})$ on all graphs,  not only bypartite ones.

\begin{corollary}
	\label{coro:bipartite}
	Restricted to bipartite graphs we have that $\HH_d(n) \le \RR_d(n)$.
\end{corollary}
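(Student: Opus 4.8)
The plan is to deduce this corollary from Theorem~\ref{thm:bipartite} together with two routine genericity facts: that $\HH_d(n)$ is already realized by points lying in an affine hyperplane, and that the generic bar-and-joint matroid $\RR_d(n)$ is freer than the bar-and-joint matroid of any fixed configuration.

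First I would check that for generic $\p=(\p_1,\dots,\p_n)\in(\R^{d-1})^n$ one has $\HH_d(\{(\p_i,1):i\in[n]\})=\HH_d(n)$. One inequality is immediate: $\{(\p_i,1)\}$ is a particular configuration in $\R^d$, so its hyperconnectivity matroid is no freer than the generic $\HH_d(n)$. For the reverse, I would start from a generic $\q\in(\R^d)^n$ computing $\HH_d(n)$; generically every $\q_i$ has nonzero last coordinate, so by Lemma~\ref{lemma:invariant}(2) each $\q_i$ may be rescaled to the form $(\p_i,1)$ without changing the matroid. This exhibits $\HH_d(n)$ as the hyperconnectivity matroid of \emph{some} configuration of that form, and since a generic configuration of that form yields the freest matroid among all of them, we get $\HH_d(\{(\p_i,1)\})\ge\HH_d(n)$ for generic $\p$.

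Second I would record the specialization monotonicity of bar-and-joint rigidity: $\RR_d(\p')\le\RR_d(n)$ for \emph{every} fixed configuration $\p'\in(\R^d)^n$. This is immediate, since a set of rows of $R(\cdot)$ is independent at $\p'$ exactly when some square submatrix has nonzero determinant there, and such a determinant is a polynomial in the point coordinates, hence nonzero at a generic configuration once it is nonzero at $\p'$. In particular this applies to the lifted configuration $\p'=\{(\p_i,0):i\in X\}\cup\{(\p_j,1):j\in Y\}$ of Theorem~\ref{thm:bipartite}, despite its points lying in two parallel hyperplanes.

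Finally I would assemble the three ingredients. Fix a bipartition $[n]=X\dot\cup Y$ and choose $\p\in(\R^{d-1})^n$ generic. Restricted to bipartite graphs with this bipartition: $\HH_d(n)=\HH_d(\{(\p_i,1)\})$ by the first step, $\HH_d(\{(\p_i,1)\})=\RR_d(\{(\p_i,0):i\in X\}\cup\{(\p_j,1):j\in Y\})$ by Theorem~\ref{thm:bipartite}, and the latter is $\le\RR_d(n)$ by the second step. Since every bipartite graph on $[n]$ respects some bipartition $[n]=X\dot\cup Y$, this gives $\HH_d(n)\le\RR_d(n)$ on bipartite graphs. I do not expect a genuine obstacle: all the real content sits in Theorem~\ref{thm:bipartite}, and the only point requiring a word of care is that the configuration in part~(1) of that theorem is non-generic, which is harmless because specializing can only make $\RR_d$ less free.
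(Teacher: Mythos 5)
Your proposal is correct and follows essentially the same route as the paper: normalize generic points to last coordinate $1$ via Lemma~\ref{lemma:invariant}, apply Theorem~\ref{thm:bipartite} to identify the restricted matroid with a bar-and-joint matroid of a special (lifted) configuration, and conclude because any special configuration is less free than the generic $\RR_d(n)$. You merely spell out the routine genericity/semicontinuity details that the paper leaves implicit.
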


\begin{proof} Consider $\HH_d(n)$ realized with generic points $\q_1, \dots, \q_n \in \R^d$. By Lemma~\ref{lemma:invariant}, there is no loss of generality in assuming that they all have last coordinate equal to $1$, so we write $\q_i=(\p_i,1)$ with $\p_i\in \R^{d-1}$.  The previous theorem tells us that $\HH_d(n) = \HH_d(\q)$, restricted to bipartite graphs, coincides with $\RR_d$ on the point set  $\{(\p_i, 0): i\in X\} \cup \{(\p_j, 1):j\in Y\}$, which is less free than the generic $\RR_d(n)$.
\end{proof}

\subsection{Where does $\PP_d(n)$ lie.}

We can similarly consider the \emph{generic polynomial rigidity matroid of degree $d-1$}, that we denote $\PP_d(n)$, defined as the matroid $\PP_d(t_1,\dots,t_n)$ for a sufficiently generic choice of $(t_1,\dots, t_n)$.
It is obvious that $\HH_d(n)$, $\RR_d(n)$ and $\CC_{d-1}^{d-2}(n)$ are freer than $\PP_d(n)$, and the last two strictly so since $K_{\binom{d+1}{2},d+1}$ is a basis in both $\RR_{d}$ and $\CC_{d-1}^{d-2}$, but it is dependent in $\PP_d$ for $d\ge 2$ by 
Corollary~\ref{coro:complete_bipartite}  below. But we do not know of any graph that is independent in $\HH_d(n)$ and dependent in $\PP_d(n)$. It is thus plausible that the answer to the next question is positive:

\begin{question}
	\label{question:equal}
	Is
	$
	\PP_d(n) = \HH_d(n)
	$ 
	for every $n$ and $d$?
\end{question}

In this section we show some common properties of $\PP_d(n)$ and $\HH_d(n)$. 
Our first result is that every complete bipartite graph has the same rank in both. It follows from the following lemma, which generalizes \cite[Theorem 6.1]{Kalai} (Kalai considers only  generic positions).

\begin{lemma}
	\label{lemma:complete_bipartite}  
	Let $G=K_{n_1,n_2}$ be a complete bipartite graph, call $n=n_1+n_2$ and let $\p_1,\dots$, $\p_n\in \R^d$ be positions in linear general position (no $d$ of them lie in a linear hyperplane). Then, the rank of $G$ in $\HH_d(\p)$ equals:	
	\begin{enumerate}
		\item $n_1n_2$ (that is, $G$ is independent) if $\min\{n_1,n_2\} \le d$.
		\item $dn-d^2$ if $\min\{n_1,n_2\} \ge d$.
	\end{enumerate}
\end{lemma}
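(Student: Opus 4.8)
The plan is to analyze the kernel of the transpose of the hyperconnectivity matrix restricted to $G=K_{n_1,n_2}$, i.e.\ the space of equilibrium stresses, since $\rank(G) = n_1n_2 - \dim(\text{stress space})$. Recall that a stress is an assignment $\omega_{ij}$ to each edge $(i,j)$ (with $i$ in part $A$, $j$ in part $B$) such that, for every vertex, the weighted sum of the corresponding entries of the rows of $H(\p)$ vanishes. Because $H(\p)$ has an asymmetric block structure ($\p_j$ in block $i$, $-\p_i$ in block $j$), the stress condition at a vertex $i\in A$ reads $\sum_{j\in B}\omega_{ij}\p_j = 0$, and at a vertex $j\in B$ it reads $\sum_{i\in A}\omega_{ij}\p_i = 0$. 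So a stress is exactly a matrix $\Omega = (\omega_{ij}) \in \R^{n_1\times n_2}$ whose every row is in the linear-dependence space of $\{\p_j : j\in B\}$ and whose every column is in the linear-dependence space of $\{\p_i : i\in A\}$.

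First I would set up notation: let $P_A$ be the $n_1\times d$ matrix with rows $\p_i$, $i\in A$, and $P_B$ the $n_2\times d$ matrix with rows $\p_j$, $j\in B$. Then the stress space is $\{\Omega : \Omega P_B = 0,\ \Omega^T P_A = 0\}$, equivalently $\Omega \in \ker(\,\cdot\, P_B)$ and $\Omega^T \in \ker(\,\cdot\, P_A)$. Under linear general position, $P_B$ has rank $\min\{n_2,d\}$ and $P_A$ has rank $\min\{n_1,d\}$. In case (1), say $n_1\le d$; then $P_A$ has full row rank $n_1$, so the only $\Omega$ with $\Omega^T P_A = 0$ is $\Omega = 0$. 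Hence the stress space is trivial and $\rank(G) = n_1 n_2$, i.e.\ $G$ is independent. This half is essentially immediate.

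For case (2), $n_1,n_2\ge d$: here $\ker(\,\cdot\, P_B)$ inside $\R^{n_2}$ has dimension $n_2 - d$, and the condition ``every row of $\Omega$ lies in this kernel'' cuts $\R^{n_1\times n_2}$ down to $\R^{n_1\times(n_2-d)}$ worth of matrices; then imposing ``every column lies in $\ker(\,\cdot\, P_A)$'' should, for generic (or merely general-position) $\p$, be $n_1 - d$ independent conditions per column. The clean way to see this is: pick a basis of the left-kernel of $P_A$, i.e.\ a $(n_1-d)\times n_1$ matrix $K_A$ of rank $n_1-d$ with $K_A P_A = 0$, and similarly $K_B$ of size $(n_2-d)\times n_2$ with $K_B P_B = 0$; then the stress space is exactly $\{K_A^T\, M\, K_B : M\in \R^{(n_1-d)\times(n_2-d)}\}$, which has dimension $(n_1-d)(n_2-d)$ provided the map $M\mapsto K_A^T M K_B$ is injective — and it is, since $K_A^T$ and $K_B^T$ have full column rank. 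Therefore $\rank(G) = n_1n_2 - (n_1-d)(n_2-d) = d(n_1+n_2) - d^2 = dn - d^2$, as claimed.

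\textbf{Main obstacle.}
The one genuinely nontrivial point is the description of the full stress space as $K_A^T M K_B$: a priori the conditions ``rows in $\ker(\cdot P_B)$'' and ``columns in $\ker(\cdot P_A)$'' need not be ``transverse,'' and one must check they do not overconstrain each other. The inclusion $\{K_A^T M K_B\} \subseteq \text{stress space}$ is trivial ($K_A^T M K_B\cdot P_B = K_A^T M (K_B P_B) = 0$, etc.). For the reverse inclusion: if $\Omega P_B = 0$ then each row of $\Omega$ is a combination of the rows of $K_B$, so $\Omega = N K_B$ for a unique $N\in\R^{n_1\times(n_2-d)}$; the condition $\Omega^T P_A = 0$ becomes $K_B^T N^T P_A = 0$, and since $K_B^T$ has full column rank this forces $N^T P_A = 0$, i.e.\ each row of $N^T$ lies in the left-kernel of $P_A$, so $N^T = K_A^T M^T$ for some $M$, giving $\Omega = K_A^T M K_B$. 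Note this argument uses \emph{only} that $P_A$ has rank $\min\{n_1,d\}$ and $P_B$ has rank $\min\{n_2,d\}$, which is guaranteed by the linear general position hypothesis; no further genericity is needed, which is precisely the strengthening over Kalai's statement. The remaining bookkeeping — that $\dim\{K_A^T M K_B\} = (n_1-d)(n_2-d)$ and the final arithmetic — is routine.
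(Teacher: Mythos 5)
Your proof is correct, and it takes a related but genuinely tighter route than the paper's. The paper handles case (2) with two separate arguments: a combinatorial lower bound (an independent subgraph with $dn-d^2$ edges, built from a $K_{d,d}$ by attaching the remaining vertices one at a time along $d$ edges each) and an upper bound obtained by exhibiting a $(n_1-d)(n_2-d)$-dimensional space of stresses, namely the tensor products $l\otimes m$ of a linear dependence $l$ on the points of one part with a linear dependence $m$ on the other. Your argument works with the same key object but computes the stress space exactly: the factorization $\Omega=K_A^T M K_B$ shows that \emph{every} stress is a combination of such tensor products (the paper's $l\otimes m$ are your $K_A^T M K_B$ with $M$ of rank one), so the stress space has dimension exactly $(n_1-d)(n_2-d)$ and both bounds follow at once, with no subgraph construction needed; your case (1) is likewise the dual formulation of the paper's low-degree-vertex argument. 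A pleasant byproduct, which you point out, is that the argument uses only $\rank P_A=\min\{n_1,d\}$ and $\rank P_B=\min\{n_2,d\}$, which directly yields the more general formula $n_1n_2-(n_1-d_1)(n_2-d_2)$ recorded in Remark~\ref{rem:BolkerRoth}. One cosmetic point: the stress conditions $\sum_{j}\omega_{ij}\p_j=0$ and $\sum_{i}\omega_{ij}\p_i=0$ hold as written if the vertices of $A$ are numbered before those of $B$; for other orderings some rows of $H(\p)$ carry opposite signs, but absorbing these signs into the $\omega_{ij}$ is a linear isomorphism of the stress space, so the dimension count is unaffected.
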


\begin{proof}
	In part (1) we have that all edges have an endpoint of degree at most $d$, and edges incident to an endpoint of such low degree are automatically independent of the rest as long as they are linearly independent among them, which in our case is guaranteed by general position.
	
	For part (2),
	an independent subgraph of $G$  with $dn-d^2$ edges can be obtained starting with a $K_{d,d}$ (independent by part (1)) and adding the rest of vertices one by one, connecting each new vertex to $d$ previously added vertices.
	
	As the rigidity matrix has $n_1n_2$ rows, to finish the proof we only need to show that the orthogonal complement of the column space has dimension at least  $(n_1-d)(n_2-d)$.  We consider columns as living in $\R^n\otimes \R^m$ in the natural way, with the entry in row $(i,j)$ representing the coordinate $e_i\otimes e_j$. Regarded in this way, every tensor product $l\wedge m$ of a linear dependence $l$ among the points in one part of $G(\p)$ and a linear dependence $m$ among the points in the other part lies in the orthogonal complement of the columns. By general position, we have $n_1-d$ and $n_2-d$ dimensions to choose $l$ and $m$ from, giving a tensor product space of dimension $(n_1-d)(n_2-d)$.
\end{proof}

\begin{remark}
	\label{rem:BolkerRoth}
	Observe that in part (1) general position is equivalent to ``$\p$ is affinely independent'', and in the proof of part (2) we only use that each part of $(G,\p)$ linearly spans $\R^d$, a condition weaker than general position. In fact, the same proof gives the more general result: the rank of $K_{n_1,n_2}$ in $\HH_d(\p)$ equals $n_1n_2 - (n_1-d_1)(n_2-d_2) $ where $d_1$ and $d_2$ are the dimensions of the linear spans of the points in the two parts of the framework $(K_{n_1,n_2},\p)$. 
	
	This result  is the analogue in $\HH_d$ of the main result of \cite{BR}. Although  \cite[Theorem 5.9]{BR} gives the  rank of $K_{n_1,n_2}$ in $\RR_d(\p)$ for any choice of $\p$, we here state only the cases analogue to the ones in Lemma~\ref{lemma:complete_bipartite}:
	\begin{enumerate}
		\item If $\min\{n_1,n_2\} \le d$ and each part is in general position (that is, affinely independent) then $K_{n_1,n_2}$ is independent in $\RR_d(\p)$; its rank equals $n_1n_2$.
		\item If $\min\{n_1,n_2\} \ge d$ and each part affinely spans $\R^d$ then the rank of $K_{n_1,n_2}$ in $\RR_d(\p)$ equals $nd-\binom{d+1}{2}$ minus the number of linearly independent quadrics containing $\p$. 
		That is: if no quadric contains $\p$ (for which a necessary condition is $n_1+n_2 \ge \binom{d+2}{2}$) then $K_{n_1,n_2}$ is spanning in $\RR_d(\p)$; from there each quadric containing $\p$ lowers rank by one.
	\end{enumerate}
	The $d$-dimensional moment curve is contained in $\binom{d}{2}$ quadrics: for each $2\le i\le d$   we have the quadric $x_i=x_1x_{i-1}$ ($d-1$ quadrics) and for each $2\le i \le j \le d-1$ we have the 
	quadric $x_1 x_{i+j-1} = x_i x_j$ if $i+j\le d+1$ or the quadric $x_d x_{i+j-d} = x_i x_j$ if $i+j > d+1$ ($\binom{d-1}{2}$ quadrics). Hence, the result of Bolker and Roth recovers that the generic rank of $K_{n_1,n_2}$ in $\PP_d(n)$ (that is, the rank in $\RR_d(\p)$ for $\p$ generic along the moment curve) equals
	\[
	nd-\binom{d+1}2 -\binom{d}2 = nd-d^2,
	\]
	as stated in Lemma~\ref{lemma:complete_bipartite}.
\end{remark}

\begin{corollary}
	\label{coro:complete_bipartite}  
	Every complete bipartite graph has the same rank in $\HH_d(n)$ and $\PP_d(n)$, given by the formulas in Lemma~\ref{lemma:complete_bipartite}. In particular, in these matroids no bipartite graph is spanning  and every $K_{d+1,d+1}$ is a circuit.
\end{corollary}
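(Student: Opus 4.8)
The plan is to notice that both $\HH_d(n)$ and $\PP_d(n)$ are instances of $\HH_d(\p)$ for a point configuration $\p$ in \emph{linear} general position, so that the rank of a complete bipartite graph can be read off directly from Lemma~\ref{lemma:complete_bipartite}, whose conclusion depends only on $n_1$, $n_2$ and $d$. Indeed, a generic $\p\in(\R^d)^n$ is in linear general position, so $\HH_d(n)=\HH_d(\p)$ for such a $\p$; and by Theorem~\ref{thm:main}, $\PP_d(t_1,\dots,t_n)=\HH_d(\mathbf v)$ with $\mathbf v_i=(1,t_i,\dots,t_i^{d-1})$, where for pairwise distinct $t_i$ the vectors $\mathbf v_i$ are again in linear general position: any $d$ of them form a square Vandermonde matrix of nonzero determinant, so every $\le d$ of them are linearly independent and every larger subset linearly spans $\R^d$. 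Lemma~\ref{lemma:complete_bipartite} then gives that $K_{n_1,n_2}$ has rank $n_1n_2$ if $\min\{n_1,n_2\}\le d$ and $dn-d^2$ if $\min\{n_1,n_2\}\ge d$ in \emph{either} matroid, so the two ranks coincide. (On the $\PP_d$ side this also agrees with the value obtained via Bolker--Roth in Remark~\ref{rem:BolkerRoth}.) This is the whole first assertion, essentially immediate once the Vandermonde configuration is in hand.

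For the consequences I would work in either matroid, using that both are $\HH_d(\p)$ for $\p$ in linear general position and, for $n\ge d$, have total rank $nd-\binom{d+1}2$. A bipartite graph sits inside some $K_{n_1,n_2}$, so it suffices to show $K_{n_1,n_2}$ is never spanning: if $\min\{n_1,n_2\}\ge d$ its rank $dn-d^2$ is strictly below $nd-\binom{d+1}2$, because $d^2>\binom{d+1}2$ for $d\ge2$; if $\min\{n_1,n_2\}\le d$ a short estimate maximizing $n_1n_2$ under that constraint gives $n_1n_2<nd-\binom{d+1}2$ as soon as $d\ge2$ and $n$ is not too small (for $n\le d$ the matroid is free and the unique spanning set is $K_n$, not bipartite for $n\ge 3$). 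For the circuit statement, $K_{d+1,d+1}$ has $n=2d+2$ vertices and $\min\{n_1,n_2\}=d+1>d$, so by the Lemma its rank is $d(2d+2)-d^2=(d+1)^2-1$, exactly one less than its number of edges; thus it is dependent with nullity one, and it is a circuit precisely when $K_{d+1,d+1}\setminus e$ is independent for some --- hence, by edge-transitivity, every --- edge $e$. Deleting the endpoint $x_0$ of $e$ in the first part leaves $K_{d,d+1}$, independent by part (1) of the Lemma; adding $x_0$ back joined to the $d$ remaining vertices of the other part is a $0$-extension, and this preserves independence since the new rows are supported on a coordinate block on which all old rows vanish, so linear independence of those $d$ points forces the new coefficients (and then all coefficients) to be zero.

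I do not expect a genuine obstacle; the argument is bookkeeping around Lemma~\ref{lemma:complete_bipartite}. The two places needing a little care are the elementary inequality $n_1n_2<nd-\binom{d+1}2$ in the case $\min\{n_1,n_2\}\le d$ (where one should state the hypothesis $d\ge2$, and restrict to $n\ge d+1$, or $n\ge 3$, to stay in the range where the rank formula applies and no single edge is already spanning), and the $0$-extension step, which is standard but is exactly what upgrades the rank count to a genuine circuit.
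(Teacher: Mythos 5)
Your proposal is correct and takes essentially the same route as the paper, which states the corollary as an immediate consequence of Lemma~\ref{lemma:complete_bipartite}: that lemma was formulated for an arbitrary configuration in linear general position precisely so that it applies both to generic points (giving $\HH_d(n)$) and to the Vandermonde vectors $(1,t_i,\dots,t_i^{d-1})$ (giving $\PP_d(n)$ via Theorem~\ref{thm:main}), which is exactly your first paragraph. The additional bookkeeping you supply --- the inequality comparing $dn-d^2$ and $n_1n_2$ with $nd-\binom{d+1}{2}$, and the $0$-extension plus edge-transitivity argument upgrading nullity one to the circuit property of $K_{d+1,d+1}$ --- is the routine verification the paper leaves implicit, and it is carried out correctly.
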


For our next result, recall that a \emph{vertex $d$-split}
on a graph $G$ is the following operation: choose a vertex $v$ of degree at least  $d-1$ with its neighbors divided in three parts $A$, $B$, and $C$ with $|B|=d-1$;  remove all edges of the form $(v,w)$ with $w\in C$ and insert a new vertex $v'$ with neighbors $B\cup C \cup\{v\}$.
Vertex $d$-splits are known to preserve independence in $\RR_d(n)$ and $\CC_{d-1}^{d-2}$. The same holds for $\PP_d(n)$ and $\HH_d(n)$ as we now show. We call corank of a subset $X$ of a matroid the difference $|X|-\rank(X)$.

\begin{proposition}
	\label{prop:split}
	Corank does not increase under vertex $d$-split neither in $\HH_d$ nor in $\PP_d$. In particular, vertex splits of independent graphs are independent.
\end{proposition}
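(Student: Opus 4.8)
The plan is to reduce both claims to a single, convenient (degenerate) realization of the split graph, and then pass to the generic realization by lower semicontinuity of the rank. Write $G'$ for the $d$-split of $G$ at a vertex $v$ whose neighbourhood is partitioned as $A\cup B\cup C$ with $|B|=d-1$: in $G'$ the vertex $v$ keeps its edges to $A\cup B$, a new vertex $v'$ is joined to $B\cup C\cup\{v\}$, and the edges from $v$ to $C$ are deleted; thus $G'$ has one more vertex and exactly $d$ more edges than $G$. Since $\PP_d(t_1,\dots,t_m)$ is by definition (see the proof of Theorem~\ref{thm:main}) the hyperconnectivity matroid of the moment-curve points $\p_i=(1,t_i,\dots,t_i^{d-1})$, both $\HH_d(n)$ and $\PP_d(n)$ are of the form $\HH_d(\p)$ --- for $\p$ generic in the first case, and for $\p$ on the moment curve with generic, pairwise-distinct parameters in the second. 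It therefore suffices to prove: if $\p=(\p_1,\dots,\p_n)$ is such that $G$ attains its generic rank and $\{\p_v\}\cup\{\p_b:b\in B\}$ is linearly independent, then, placing the new vertex exactly on top of $v$ (i.e.\ taking $\p_{v'}:=\p_v$), the corank of $G'$ in $\HH_d(\p_1,\dots,\p_n,\p_v)$ is at most the corank of $G$ in $\HH_d(\p)$. Both hypotheses hold for generic $\p$, and for $\p$ on the moment curve with distinct parameters the second is a Vandermonde nonvanishing.

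For the coincident realization I would argue via stresses. As in block $x$ the hyperconnectivity matrix has, for the edge $\{x,y\}$, a scalar multiple of $\p_y$, a stress of $\HH_d(\p)$ restricted to a graph is an antisymmetric edge weighting $(c_{xy})$, $c_{xy}=-c_{yx}$, with $\sum_{y\sim x}c_{xy}\p_y=0$ at every vertex $x$, and the corank is the dimension of this space. With $\p_{v'}:=\p_v$, I would define a linear map $\Phi$ from stresses of $G'$ to stresses of $G$ that ``merges the weights at $v$ and $v'$'': $c^G$ agrees with $c$ on every edge not through $v$; $c^G_{va}:=c_{va}$ for $a\in A$; the edge $\{v,c\}$ of $G$ receives the weight $c^G_{vc}:=c_{v'c}$ of the edge $\{v',c\}$ of $G'$ for $c\in C$; and $c^G_{vb}:=c_{vb}+c_{v'b}$ for $b\in B$. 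Using $\p_{v'}=\p_v$ one checks that the equilibrium of $G$ at $v$ is the sum of the equilibria of $G'$ at $v$ and at $v'$ (the $\{v,v'\}$-contributions cancelling by antisymmetry), while the equilibria at all other vertices match term by term; so $\Phi$ is well defined. Its kernel consists of the stresses of $G'$ supported on $\{vb,v'b:b\in B\}\cup\{vv'\}$ with $c_{v'b}=-c_{vb}$; for such a weighting every equilibrium other than the one at $v$ is automatic, and that one becomes the single vector equation $\sum_{b\in B}c_{vb}\,\p_b+c_{vv'}\,\p_v=0$ with coefficient matrix $[\,\p_{b_1}\mid\cdots\mid\p_{b_{d-1}}\mid\p_v\,]$. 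By hypothesis this $d\times d$ matrix is invertible, so $c=0$; thus $\Phi$ is injective and the corank of $G'$ at $(\p,\p_v)$ is at most that of $G$ at $\p$. (Equivalently: the column operation adding the block of $v'$ to that of $v$ turns the $G'$-matrix at $\p_{v'}=\p_v$ into the $G$-matrix with the rows $\{v,b\}$, $b\in B$, duplicated, plus the $d$ columns of block $v'$ and the single row $\{v,v'\}$ supported only there, and the same Vandermonde invertibility makes those $d$ columns raise the rank by exactly $d$.)

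Finally, setting $\rho:=\rank(G)$ in $\HH_d(n)$ (resp.\ $\PP_d(n)$), the locus of configurations in $\R^{(n+1)d}$ on which $G'$ has rank $\ge\rho+d$ is Zariski-open and, by the above, nonempty: it contains the coincident configurations built from a generic $\p$, and in the moment-curve case it contains coincident configurations lying on the curve (their parameter vector being $(t_1,\dots,t_n,t_v)$). Hence it is dense, so a generic configuration lies in it, and so does a generic moment-curve configuration; therefore $\rank(G')\ge\rho+d$ in $\HH_d(n+1)$ (resp.\ $\PP_d(n+1)$), which, as $G'$ has exactly $d$ more edges than $G$, says precisely that the corank does not increase. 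The last sentence of the statement is then immediate, since a stress-free $G$ has corank $0$. The one step I expect to require care is the injectivity of $\Phi$ --- equivalently, that block $v'$ contributes the full $d$ new units of rank --- which is exactly where $|B|=d-1$ enters, through invertibility of $[\,\p_{b_1}\mid\cdots\mid\p_{b_{d-1}}\mid\p_v\,]$, valid both for generic points and for distinct points on the moment curve.
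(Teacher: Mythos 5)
Your proof is correct and follows essentially the same strategy as the paper's: place the new vertex $v'$ on top of $v$, use the linear independence of $\p_v,\p_{v_1},\dots,\p_{v_{d-1}}$ (a Vandermonde condition on the moment curve) to control the coincident configuration, and then pass to the generic (or generic moment-curve) configuration by semicontinuity of rank. The only cosmetic difference is that you work with the left kernel (stresses) via the merging map $\Phi$, whereas the paper works with the right kernel (infinitesimal motions), showing they are repeated verbatim in the blocks of $v$ and $v'$; both give the needed corank comparison.
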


\begin{proof}
	The standard proof that vertex split preserves independence in $\RR_d$~\cite{Whiteley-split} carries over as follows.
	To fix notation, let $B=\{v_1,\dots,v_{d-1}\}$ be the neighbors of $v$ that are joined to both $v$ and $v'$ after the split, and let $G'$ be the graph obtained by the split.
	
	Let $\p$ be arbitrary positions for the vertices of $G$, and denote $\p_w$ the position of each vertex $w$. 
	Let $\p'$ be positions for $G'$ defined by $\p'_w=\p_w$ for every vertex in $G$ and $\p'_{v'} = \p_v$.
	Assuming that $\p_v, \p_{v_1},\dots, \p_{v_{d-1}}$ are linearly independent (that is, a basis of $\R^d$) we
	have that every vector orthogonal to all rows of the matrix $H(\p)$ becomes orthogonal to all rows of $H(\p')$ by simply repeating in the block of $v'$  what was in the block of $v$. Conversely, every vector orthogonal to all rows of $H(\p')$ has the same content in the blocks of $v$ and $v'$. Indeed, if we call $u,u'\in \R^d$ what the vector has in the blocks of $v$ and $v'$, the vertex split construction implies that $u$ and $u'$ have the same scalar product with the vectors $\p_v, \p_{v_1},\dots,\p_{v_{d-1}}$.
	
	That is, as long as $\p_v, \p_{v_1},\dots, \p_{v_{d-1}}$ are a basis, the coranks of $G$ in $\HH_d(\p)$ and of $G'$ in $\HH_d(\p')$ coincide. In particular, this holds for generic positions (matroid $\HH_d(n)$) and for generic positions along the moment curve (matroid $\PP_d(n)$). Of course, the positions $\p'$ are not generic since $\p'_v = \p'_{v'}$, but perturbing $\p'$ to be generic can only increase the rank of $G'$ and decrease its corank.
\end{proof}

The following is an example where a vertex split makes the corank decrease in $\HH_2$: Let $G$ be the graph with edges $\{12, 23, 34, 14, 15, 25, 35, 46, 56\}$. It is a basis (in any abstract rigidity matroid of dimension $2$) since vertices $6$, $4$ and $3$, deleted in this order, remove  two edges each  and result in a $K_3$. Let $G'$ be the vertex split of $5$ in $G$ with $A=\{1,3\}$, $B=\{6\}$ and $C=\{2\}$. This is again a basis by the proposition above. It turns out that $G'$ is isomorphic to the vertex split of any vertex in $K_{3,3}$ with $|A|=|B|=|C|=1$; since $K_{3,3}$ is a circuit, this vertex split decreases the corank from one to zero.

A variation of vertex split is what we here call a \emph{diamond $d$-split}: choose again a vertex $v$, now of degree at least  $d$ and with its neighbors divided in three parts $A$, $B$, and $C$ with $|B|=d$. Then remove all edges of the form $(v,w)$ with $w\in C$ and insert a new vertex $v'$ with neighbors $B\cup C$. Observe that we now do not insert the edge $vv'$.
The following result is proved in \cite[Lemma 3.8]{KNN} in a context that includes $\HH_d$, but restricted to bipartite graphs:

\begin{proposition}
	\label{prop:diamond}
	Corank does not increase under diamond $d$-split neither in $\HH_d$ nor in $\PP_d$. In particular, diamond splits of independent graphs are independent.
\end{proposition}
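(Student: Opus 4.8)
The plan is to follow the proof of Proposition~\ref{prop:split} almost verbatim; the only structural differences are that now $|B|=d$ (rather than $d-1$) and that the edge $vv'$ is \emph{not} created. It turns out these differences make the argument slightly shorter: the set $B$ alone already spans $\R^d$ at a generic configuration, so we no longer need the extra scalar-product condition that the vertex-split proof extracted from the edge $vv'$.

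Write $B=\{v_1,\dots,v_d\}$, let $A$ and $C$ be the other two parts of the neighbourhood of $v$, and let $G'$ be the result of the diamond $d$-split. First a bookkeeping remark: passing from $G$ to $G'$ we delete the $|C|$ edges $vw$ with $w\in C$ and add the $d+|C|$ edges from the new vertex $v'$ to $B\cup C$, so $G'$ has $d$ more edges and one more vertex than $G$; hence $|E(G')|-|V(G')|d=|E(G)|-|V(G)|d$, and the corank of $G$ in $\HH_d(\p)$ coincides with that of $G'$ in $\HH_d(\p')$ as soon as the kernels of $H(\p)|_G$ and of $H(\p')|_{G'}$ have the same dimension. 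Here a vector $u$, viewed as one block $u_w\in\R^d$ per vertex $w$, lies in the kernel of $H(\p)|_G$ exactly when $\langle\p_i,u_j\rangle=\langle\p_j,u_i\rangle$ for every edge $\{i,j\}$ of $G$. Given positions $\p$ for $G$, put $\p'_w=\p_w$ for each vertex of $G$ and $\p'_{v'}=\p_v$, and assume that $\p_{v_1},\dots,\p_{v_d}$ form a basis of $\R^d$. I would then verify that the ``duplication'' map, sending $u$ to the vector $u'$ with $u'_w=u_w$ for $w\neq v'$ and $u'_{v'}=u_v$, is a linear isomorphism from the kernel of $H(\p)|_G$ onto that of $H(\p')|_{G'}$. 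It is well-defined because the new edge-conditions are those of the edges $\{v',w\}$ with $w\in B\cup C$, and each such condition reads $\langle\p_w,u_v\rangle=\langle\p_v,u_w\rangle$, which is the edge-condition of $\{v,w\}$ in $G$ (valid because every $w\in B\cup C$ is a neighbour of $v$ in $G$). It is injective because restricting $u'$ to the vertices of $G$ recovers $u$. And it is surjective because, given $u'$ in the kernel of $H(\p')|_{G'}$, subtracting the conditions of the edges $\{v,v_i\}$ and $\{v',v_i\}$ for $i=1,\dots,d$ gives $\langle\p_{v_i},u'_v-u'_{v'}\rangle=0$ for all $i$, whence $u'_v=u'_{v'}$ since the $\p_{v_i}$ span $\R^d$; thus $u'$ is the duplication of its restriction to the vertices of $G$, and that restriction lies in the kernel of $H(\p)|_G$ (the only $G$-edges missing from $G'$ are the $vw$ with $w\in C$, and for those the condition follows from the condition of $\{v',w\}$ in $G'$ together with $u'_v=u'_{v'}$).

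It then remains to pass to the generic matroids, exactly as in Proposition~\ref{prop:split}. For $\HH_d(n)$ take $\p$ generic, so that any $d$ of the $\p_w$ are a basis and the corank of $G$ in $\HH_d(\p)$ equals its corank in $\HH_d(n)$; then the corank of $G'$ in $\HH_d(n)$ is at most its corank in $\HH_d(\p')$, because the non-generic $\p'$ (which has $\p'_{v'}=\p'_v$) can be perturbed to a generic configuration without lowering rank, and this latter corank equals the corank of $G$ in $\HH_d(\p)$, hence in $\HH_d(n)$. For $\PP_d(n)$ use the realisation of $\PP_d(t_1,\dots,t_n)$ as $\HH_d$ of the vectors $(1,t_i,\dots,t_i^{d-1})$; here the basis hypothesis on $B$ is automatic, since any $d$ of these vectors form a Vandermonde matrix --- hence a basis --- as soon as the $t_i$ are distinct, and the perturbation of $\p'$ can be kept among moment-curve configurations by moving $t_{v'}$ off the value $t_v$. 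I do not expect a genuine obstacle: the only delicate points are the edge/vertex bookkeeping and the observation that, because $|B|=d$, the duplication argument closes without the auxiliary edge $vv'$ that the vertex-split proof relied on.
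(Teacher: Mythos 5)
Your proof is correct and follows exactly the route the paper intends: it is the vertex-split argument of Proposition~\ref{prop:split} adapted by taking $|B|=d$ and assuming $\{\p_u:u\in B\}$ is a basis, which is precisely what the paper's (reader-left) proof of Proposition~\ref{prop:diamond} prescribes. You simply fill in the details (edge/vertex bookkeeping, the duplication isomorphism of kernels, and the perturbation step, including keeping it on the moment curve for $\PP_d$), all of which check out.
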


\begin{proof}
	The proof is very similar to the previous one, and left to the reader. The only significant difference is that now we call $B=\{v_1,\dots,v_{d}\}$ and assume $\{\p_u: u\in B\}$ to be a linear basis, instead of $\{\p_u: u\in B\cup\{v\}\}$.
\end{proof}

Let us finish by summing up what these results give as for $d=2$, in which $\PP_2 =\HH_2$. In this case all bases of  $\HH_2$ are Laman graphs, and they include all \emph{planar} Laman graphs (since every planar Laman graph can be obtained from $K_3$ by vertex splits~\cite{FJW}), but no \emph{bipartite} Laman graph (since bipartite graphs cannot be rigid in $\HH_d$ for any $d\ge 2$).  
Theorem 6.9 in  \cite{JT}, taking into account that independence in $\HH_2$ is preserved under 0-extensions and diamond splits, implies that the case $d=2$ of Conjecture~\ref{conj:maximalH} holds if, and only if, it has the property that every dependent flat is a union of copies of $K_4$ and $K_{3,3}$.



\section*{Acknowledgements}

We thank Daniel Irving Bernstein, Bill Jackson, Gil Kalai, Meera Sitharam and Walter Whiteley for  comments on an early version of this paper. In particular, for directing us to  the possible relation between $\PP_d(n)$ and $\HH_d(n)$. We also thank Eran Nevo for clarifying discussions on the relation between rigidity and shifting, and  anonymous referees for helpful comments on the first version of this paper.


%
%

\bibliographystyle{alphaurl}
\bibliography{relation}

\end{document}